\newtheorem{theorem}{Theorem}[section]
\newtheorem{cor}{Corolary}[section]
\newtheorem{lemma}{Lemma}[section]
\newtheorem{pro}{Proposition}[section]
\newtheorem{claim}{Claim}[section]
\newcommand{\R}{\mathbb{R}}
\newcommand{\ddd}{\displaystyle}
\begin{document}
	
\setlength{\baselineskip}{6.5mm} \setlength{\oddsidemargin}{8mm}
\setlength{\topmargin}{-3mm}

\title{\bf On an Ambrosetti-Prodi type problem in $\R^N$}

\author{Claudianor O. Alves\thanks{C.O.Alves was partially supported by CNPq/Brazil 304804/2017-7, coalves@mat.ufcg.edu.br}\, , Romildo N. de Lima\thanks {romildo@mat.ufcg.edu.br}\, and \,Alânnio B. Nóbrega \thanks{alannio@mat.ufcg.edu.br}\,\,\,\,\,\,\vspace{2mm}
	\and {\small  Universidade Federal de Campina Grande} \\ {\small Unidade Acadêmica de Matemática} \\ {\small CEP: 58429-900, Campina Grande - PB, Brazil}\\}

\date{}

\maketitle

\normalsize

\begin{abstract}
In this paper we study results of existence and non-existence of solutions for the following Ambrosetti-Prodi type problem  
$$
\left\{
\begin{array}{lcl}
-\Delta u=P(x)\Big( g(u)+f(x)\Big) \mbox{ in } \mathbb{R}^N,\\
u \in D^{1,2}(\R^N),\ \lim_{|x|\to +\infty}u(x)=0,
\end{array}
\right.
\eqno{(P)}
$$
where $N\geq3$, $P\in C(\R^N,\R^+)$, $f\in C(\R^N)\cap L^{\infty}(\R^N)$ and $g\in C^1(\R)$. The main tools used are the sub-supersolution method and Leray-Schauder topological degree theory.

\vspace{0.5cm}

\noindent{\bf Mathematics Subject Classifications:} 35B51, 47H11, 35A16

\noindent {\bf Keywords:} Comparison principles, Degree Theory, Topological Methods
\end{abstract}
	
\section{Introduction and main results}

This paper concerns with the existence and non-existence of solutions for the following Ambrosetti-Prodi type problem  
$$
\left\{
\begin{array}{lcl}
-\Delta u=P(x)\Big( g(u)+f(x)\Big) \mbox{ in } \mathbb{R}^N,\\
u \in D^{1,2}(\R^N),\ \lim_{|x|\to +\infty}u(x)=0,
\end{array}
\right.
\eqno{(P)}
$$
where $N\geq3$, $P\in C(\R^N,\R^+)$, $f\in C(\R^N)\cap L^{\infty}(\R^N)$ and $g\in C^1(\R)$. 

The main motivation to study the problem $(P)$ comes from the seminal paper by Ambrosetti and Prodi  \cite{Ambrosetti-Prodi} that studied the existence and non-existence of solution for the problem
\begin{equation} \label{P1}
	\left\{                   
	\begin{array}{lcl}
	-\Delta u= g(u)+f(x),\mbox{ in } \Omega,\\
	u=0,\mbox{ in } \partial\Omega,
	\end{array}
	\right.
\end{equation}
where $\Omega\subset \R^N$ with $N \geq 3$, is a bounded domain, $g$ is a $C^2-$function with 
$$
g''(s)>0, \quad \forall s \in \R \quad \mbox{and} \quad 0<\lim_{s\rightarrow -\infty}g'(s)<\lambda_1<\lim_{s\rightarrow \infty}g'(s)< \lambda_2.
$$ 
In order to prove their results, Ambrosetti and Prodi used a global result of inversion to proper functions to show the existence  of a closed manifold $M$ dividing the space $C^{0,\alpha}(\Omega)$ in two connected components $O_1$ and $O_2$ such that: \\
     \noindent (i) If $f$ belongs to $O_1$,  the problem (\ref{P1}) has no solution;\\
	 \noindent (ii) If $f$ belongs to $M$,  the problem (\ref{P1}) has exactly one solution;\\
	 \noindent (iii) If $f$ belongs to $O_2$,  the problem (\ref{P1}) has exactly two solution;

In \cite{Berger-Podolak}, Berger and Podolak proposed the decomposition of function $f$ in the form $f=t\phi+f_1,$ where $\phi$ is eigenfunction associated to first eigenvalue of "$-\Delta$"
\begin{equation} \label{P2}
\left\{                   
\begin{array}{lcl}
-\Delta u= g(u)+t\phi+f_1,\mbox{ in } \Omega,\\
u=0,\mbox{ in } \partial\Omega,
\end{array}
\right.
\end{equation}
then using the Liapunov-Schmidt method they showed the existence of $t_0\in \R$ such that (\ref{P2}) has at least two solutions if $t<t_0$, at least one solution if $t=t_0$ and no solutions if $t>t_0$. Ever since many papers have dealt with this theme, we cite the works by Brézis and Turner \cite{Brezis-Turner}, de Figueiredo and Solimini \cite{djairo2}, de Figueiredo and Yang \cite{djairo 3}, Mawhin \cite{M} de Morais Filho \cite{de Morais Filho} and de Figueiredo \cite{Djairo}.   More recently, de Figueiredo and Sirakov \cite{djairo 4} studied a Ambrosetti-Prodi problem for an operator in non-divergence form, Aizicovici, Papageorgiou and Staicu \cite{Aizicovici-Papageorgiou-Staicu} and Arcoya and Ruiz \cite{Arcoya-Ruiz}  treated the quasilinear operator cases, de Paiva and Montenegro studied a quasilinear Newman problem \cite{PM} and   Presoto and de Paiva \cite{Presoto} showed an Ambrosetti-Prodi type result to a Newmann problem with a gradient non-linearity.  

Although that subject has been studied in the most varied situations, we did not find in the literature articles about Ambrosetti-Prodi type  problems in whole space $\R^N$ and this has motivated the present paper. In order to get our main results, we face some difficulties, because some estimates do not follow as in bounded domain case, for example in all of the papers mentioned above involving Dirichlet boundary conditions, the fact that the  outer normal derivate on the boundary is positive, see \cite[Lemma 3.4]{G-T}, is a key point to prove some estimates. Here, we overcome these difficulties by adapting for our case some ideas found in  \cite{Alves-delima-Nobrega,Alves-delima-Nobrega2}, where one of the main points  was to fix a suitable function space where the topological degree could be used to establish a second solution for our case, for more details see Section $5$.  The reader is also invited to see that the arguments used to prove a priori estimates for our problem is a little bit different from those found in the literature because we are working in whole $\mathbb{R}^N$, see Section 4.  

Before stating our main results, we need to fix the assumptions on the functions $P$ and $g$. In the sequel, $g: \R \rightarrow \R$ is a $C^1-$function that satisfies the following inequalities

$$
\limsup_{s \to -\infty}\frac{g(s)}{s}<\lambda_1<\liminf_{s \to \infty}\frac{g(s)}{s} \leqno{(G_1)}
$$

Hereafter, the limits above can be infinite, where  $\lambda_1$ is the first eigenvalue to following problem
$$
\left\{
\begin{array}{lcl}
-\Delta u=\lambda P(x)u \mbox{ in } \mathbb{R}^N,\\
u \in D^{1,2}(\R^N),\ \lim_{|x|\to +\infty}u(x)=0,
\end{array}
\right.
\eqno{(P)_{\lambda}}
$$
which has the variational characterization below
\begin{equation} \label{lambda1}
\lambda_1=\inf_{v\in D^{1,2}(\R^N)\setminus\{0\}}\left\{\frac{\int_{\R^N}|\nabla v|^2dx}{\int_{\R^N}P(x)|v|^2dx}\right\}.
\end{equation}
Related to the $\lambda_1$, we have an eigenfunction $\phi_1$ that satisfies 
\begin{equation}\label{m1}
0<C_1\leq |x|^{N-2}\phi_1(x)\leq C_2, \quad \forall x\in\R^N,
\end{equation}
for positive constants $C_1,C_2$. For more details about this subject see \cite{Alves-Lima-Souto}.

As an immediate consequence of the $(G_1)$, there exist positive constants $\Theta$, $\underline{\mu}$ and $\overline{\mu}$ such that $\underline{\mu}<\lambda_1<\overline{\mu},$ 
\begin{equation}\label{eq01}
g(s)\ge \underline{\mu}s-\Theta,\ \forall s \in \R,	
\end{equation}
and
\begin{equation}\label{eq02}
g(s)\ge \overline{\mu}s-\Theta,\ \forall s \in \R.	
\end{equation}

Related to the function $P:\mathbb{R}^N \to \mathbb{R}^+ $, we consider that it is a continuous function satisfying: 

\noindent $(P_1)$ $|\cdot|^2 P(\cdot) \in L^{1}(\R^N)\cap L^{\infty}(\R^N)$;

\noindent $(P_2)$ $\int_{\mathbb{R}^N}\frac{P(y)}{|x-y|^{N-2}}dy \le \frac{C}{|x|^{N-2}}$, for all $x\in\R^N \setminus \{0\}$, for some $C>0$.

We denote by $\mathcal{N}$ the eigenspace associated with the first eigenvalue $\lambda_1$. By \cite{Alves-Lima-Souto}, it is well known that $dim \, \mathcal{N}=1$, then we can assume that $\mathcal{N}=Span\{\phi\}$, where $\phi$ is one positive eigenfunction associated with $\lambda_1$ with 
\begin{equation}
\int_{\R^N}P(x)|\phi|^2dx=1.
\end{equation}
Hence, we can write $f=t\phi+f_1,$ where $f_1\in C(\R^N)\cap L^{\infty}(\R^N)$ with
\begin{equation}
\int_{\R^N}P(x)f_1\phi dx=0\ \mbox{and}\ \int_{\R^N}P(x)f\phi dx=t.
\end{equation}
From this,  problem $(P)$ can be rewritten as follows
$$
\left\{
\begin{array}{lcl}
-\Delta u=P(x)\Big( g(u)+t\phi(x)+f_1(x)\Big) \mbox{ in } \mathbb{R}^N,\\
u \in D^{1,2}(\R^N),\ \lim_{|x|\to +\infty}u(x)=0.
\end{array}
\right.
\eqno{(\tilde{P})}
$$

Our first result is the following:
\begin{theorem}\label{T1}
	Assume the conditions $(G_1)$, $(P_1)$ and $(P_2)$. Then, for each $f_1 \in \mathcal{N}^{\perp}$ there is a number $\alpha(f_1)$ such that:
	\begin{description}
		\item[(i)] The problem $(\tilde{P})$ has no solution whenever $t>\alpha(f_1)$;
		\item[(ii)] If $t<\alpha(f_1)$, then $(\tilde{P})$ has at least one solution. 
	\end{description}
\end{theorem}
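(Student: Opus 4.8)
The plan is to prove the two parts by rather different mechanisms, both hinging on the splitting $f = t\phi + f_1$ and on the two linear lower bounds $(\ref{eq01})$ and $(\ref{eq02})$ for $g$. The strategy is first to establish a nonexistence threshold via a pairing-with-$\phi$ argument, and then to obtain existence below the threshold by the sub-supersolution method together with a monotonicity-in-$t$ argument to pass from isolated values of $t$ to the whole half-line.

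First I would prove part (i). Suppose $u$ solves $(\tilde P)$ for some value $t$. The natural test is to pair the equation against the positive first eigenfunction $\phi$. Multiplying $-\Delta u = P(x)(g(u) + t\phi + f_1)$ by $\phi$, integrating over $\R^N$, and integrating by parts, the left-hand side becomes $\int_{\R^N}\nabla u\cdot\nabla\phi\,dx = \int_{\R^N}\nabla\phi\cdot\nabla u\,dx$, which by the eigenvalue equation for $\phi$ equals $\lambda_1\int_{\R^N}P(x)\phi u\,dx$. On the right-hand side I use the lower bound $(\ref{eq02})$, namely $g(s)\ge \overline{\mu}s-\Theta$ with $\overline{\mu}>\lambda_1$, together with $\int_{\R^N}P(x)f_1\phi\,dx = 0$ and $\int_{\R^N}P(x)\phi^2\,dx = 1$. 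This yields an inequality of the shape
\begin{equation*}
\lambda_1\int_{\R^N}P(x)\phi u\,dx \ge \overline{\mu}\int_{\R^N}P(x)\phi u\,dx - \Theta\int_{\R^N}P(x)\phi\,dx + t,
\end{equation*}
so that $t \le (\lambda_1-\overline{\mu})\int_{\R^N}P(x)\phi u\,dx + \Theta\int_{\R^N}P(x)\phi\,dx$. Since $\overline{\mu}>\lambda_1$, the coefficient of the integral is negative; to convert this into a genuine upper bound on $t$ that is independent of $u$, I will need a lower bound on $\int_{\R^N}P(x)\phi u\,dx$. This is where $(\ref{eq01})$ and the first eigenvalue enter: using $g(s)\ge\underline{\mu}s-\Theta$ with $\underline{\mu}<\lambda_1$ in the same pairing gives control from the other side, forcing $\int_{\R^N}P(x)\phi u\,dx$ to be bounded below in terms of $t$ and $\Theta$. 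Combining the two pairings should produce a finite constant $\alpha(f_1)$ such that solvability forces $t\le\alpha(f_1)$, which is exactly part (i).

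For part (ii), I would fix $t<\alpha(f_1)$ and build a solution by sub- and super-solutions. A subsolution is the easy object: using $(\ref{eq01})$, the linear problem $-\Delta\underline{u} = P(x)(\underline{\mu}\,\underline{u} - \Theta + t\phi + f_1)$ is solvable since $\underline{\mu}<\lambda_1$ (the operator $-\Delta - \underline{\mu}P$ is coercive/invertible in $D^{1,2}(\R^N)$), and its solution $\underline{u}$ satisfies $-\Delta\underline{u}\le P(x)(g(\underline{u})+t\phi+f_1)$, hence is a subsolution. The supersolution is the delicate point: for $t$ strictly below the threshold one expects a bounded supersolution to exist, and I would construct it either from the Newtonian-potential estimate $(P_2)$ — which is precisely what controls the decay $|x|^{2-N}$ and lets one place $\lim_{|x|\to\infty}u=0$ — or by a continuity/degree argument inherited from solvability at a nearby value of $t$. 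Once an ordered pair $\underline{u}\le\overline{u}$ is in hand, the standard monotone iteration in $D^{1,2}(\R^N)$, using the $L^\infty$-bound on $f$ and the compactness furnished by $(P_1)$, produces a solution between them.

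The main obstacle I anticipate is the construction of the global supersolution and the attendant a priori control at infinity. On a bounded domain one would simply take a large constant or a multiple of the torsion function, but on all of $\R^N$ one must respect both $u\in D^{1,2}(\R^N)$ and $\lim_{|x|\to\infty}u(x)=0$, so a constant is not admissible and the supersolution must decay. This is exactly the difficulty the authors flag in the introduction, and hypothesis $(P_2)$ together with the eigenfunction decay $(\ref{m1})$ is the tool I would lean on: $(P_2)$ guarantees that Newtonian potentials of $P$-weighted data decay like $|x|^{2-N}$, which is what makes a decaying supersolution feasible and keeps the whole iteration inside the correct function space. Verifying the ordering $\underline{u}\le\overline{u}$ and the required comparison/maximum principle on the unbounded domain is the second place where care is needed, and I expect to invoke the comparison principles referenced in the paper's keywords rather than the boundary normal-derivative arguments that are unavailable here.
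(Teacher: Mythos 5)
Your nonexistence computation is essentially the paper's Lemma \ref{l2}: pairing the equation with the positive eigenfunction and playing (\ref{eq01}) against (\ref{eq02}) does produce a finite constant $\tau_*$ such that solvability forces $t\le\tau_*$ (the paper splits on the sign of $\int_{\R^N}P(x)u\phi_1\,dx$ rather than substituting one inequality into the other, but both give $t\le\Theta\int P\phi_1$). The genuine gap is the role you then assign to this constant. You declare $\alpha(f_1)$ to be the constant coming out of the pairing argument and propose to solve $(\tilde P)$ for every $t$ below it. That version of part (ii) is false: the constants $\underline{\mu},\overline{\mu},\Theta$ in (\ref{eq01})--(\ref{eq02}) are not canonical (enlarging $\Theta$ inflates $\tau_*$ without changing the problem), so the pairing constant is only \emph{an} upper bound for the solvable set, not its supremum. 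The correct structure, which the paper follows, is to show that $S=\{t:(\tilde P)\ \mbox{is solvable}\}$ is (a) nonempty, (b) bounded above (your pairing argument), and (c) downward closed --- if $u_0$ solves the problem at $t_0$ then, since $\phi>0$, $u_0$ is a supersolution at every $t<t_0$, and the canonical subsolution $w_t$ (the solution of $-\Delta w=P(\underline{\mu}w-\Theta+t\phi+f_1)$) lies \emph{below every supersolution} by the comparison principle for $-\Delta-\underline{\mu}P$ (Lemma \ref{max}, valid because $\underline{\mu}<\lambda_1$), so Proposition \ref{p2} applies to the ordered pair. Only then does one \emph{define} $\alpha(f_1):=\sup S$, after which (i) and (ii) are immediate.

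The second, related gap is step (a): you never actually exhibit a supersolution for any value of $t$, only gesture at ``$(P_2)$ or a continuity/degree argument inherited from solvability at a nearby value of $t$'' --- the latter is circular, since no value of $t$ is yet known to be solvable. The paper's Lemma \ref{super} does the real work here: fix $L>0$, set $m=\max\{g(s)+f_1(x):\ x\in\R^N,\ 0\le s\le L\}$, take a cutoff $F$ with $F\equiv 0$ on a large ball $B_{R_1}$ and $F\equiv m$ at infinity, and solve $-\Delta v=P(x)F(x)$; since $P\in L^1(\R^N)\cap L^{q'}(\R^N)$ one can choose $R_1$ so large that $|v|_\infty\le L$, and $(P_2)$ gives $v(x)=O(|x|^{2-N})$. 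For $t$ sufficiently negative one has $m+t\phi_1<F$ pointwise, whence $-\Delta v>P(x)(g(v)+t\phi_1+f_1)$ and $v$ is an admissible decaying supersolution. Without some such explicit construction your half-line is not known to be nonempty, and without redefining $\alpha(f_1)$ as the supremum of the solvable set your part (ii) does not follow.
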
 

Our second result is the following:
\begin{theorem}\label{T2}
	Assume the conditions $(G_1)$, $(P_1)$ and $(P_2)$. Moreover, assume that $g$ is an increasing function satisfying
	\begin{equation} \label{sigma}
	\lim_{s\rightarrow +\infty}\frac{g(s)}{s^{\sigma}}=0,
	\end{equation}
	where $\sigma=\frac{N}{N-2}$. Then, for each $f_1 \in \mathcal{N}^{\perp}$ there is a number $\alpha(f_1)$ such that:
	\begin{description}
		\item[(i)]  If $t<\alpha(f_1)$, then $(\tilde{P})$ has at least two solutions; 
		\item[(ii)] If $t=\alpha(f_1)$, then $(\tilde{P})$ has at least one solution.
	\end{description}
\end{theorem}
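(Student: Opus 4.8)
The plan is to recast $(\tilde{P})$ as a fixed-point equation $u = T_t(u)$ in a suitable Banach space $X$ of functions decaying like $|x|^{2-N}$, where $T_t(u) = K\big(g(u) + t\phi + f_1\big)$ and $K$ is the solution operator of $-\Delta v = P(x)w$ given by the Newtonian potential. Conditions $(P_1)$–$(P_2)$ guarantee that $K$ carries bounded sets into $X$ with the correct decay, and the growth restriction $\lim_{s\to+\infty} g(s)/s^{\sigma}=0$ with $\sigma=\frac{N}{N-2}$ is exactly what makes $T_t$ compact on $X$, so that the Leray--Schauder degree $\deg(I-T_t,\cdot,0)$ is well defined. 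Throughout, $\alpha(f_1)$ is the threshold furnished by Theorem \ref{T1}.

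For part \textbf{(i)}, fix $t<\alpha(f_1)$. From Theorem \ref{T1}(ii) and the sub--supersolution method I obtain a first solution $u_t$ lying in the interior of an order interval $\mathcal{O}=\{u\in X : \underline{u}<u<\overline{u}\}$ determined by a strict subsolution $\underline{u}$ and a strict supersolution $\overline{u}$. Because $g$ is increasing, $T_t$ is order preserving on $\overline{\mathcal{O}}$, and the standard order-interval argument yields $\deg(I-T_t,\mathcal{O},0)=1$. Next I compute the degree on a large ball: choosing $t'>\alpha(f_1)$ and using the homotopy $\tau\mapsto T_{(1-\tau)t+\tau t'}$ for $\tau\in[0,1]$, the a priori estimates of Section 4 bound all solutions uniformly for the parameter in $[t,t']$, so no solution sits on $\partial B_R$ once $R$ is large; since $(\tilde{P})$ has no solution at all for the parameter $t'$ by Theorem \ref{T1}(i), homotopy invariance gives $\deg(I-T_t,B_R,0)=0$. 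By the excision and additivity properties,
\[
\deg(I-T_t,\,B_R\setminus\overline{\mathcal{O}},\,0)=0-1=-1\neq 0,
\]
so there is a second solution in $B_R\setminus\overline{\mathcal{O}}$, distinct from $u_t\in\mathcal{O}$.

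For part \textbf{(ii)}, take $t=\alpha(f_1)$ and a sequence $t_n\uparrow\alpha(f_1)$ with $t_n<\alpha(f_1)$; each $t_n$ admits a solution $u_n=T_{t_n}(u_n)$ by part (i). The a priori estimates of Section 4, applied uniformly on a compact interval containing all the $t_n$, give $\|u_n\|_X\leq C$, and the compactness of the family $\{T_{t_n}\}$ lets me extract $u_n\to u$ in $X$ with $T_{t_n}(u_n)\to T_{\alpha(f_1)}(u)$; passing to the limit yields $u=T_{\alpha(f_1)}(u)$, a solution at the threshold.

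The main obstacle is the a priori bound underpinning both parts: unlike the bounded-domain setting, there is no boundary and hence no positive outer normal derivative to exploit, so the uniform estimate on $\|u\|_X$ over a compact range of $t$ must be extracted purely from the growth condition $g(s)/s^{\sigma}\to 0$ together with $(P_1)$–$(P_2)$. Pinning down this estimate---and with it the compactness of $T_t$ in the weighted space of Section 5---is the crux; once it is in hand, the degree bookkeeping above is routine.
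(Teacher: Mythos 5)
Your overall strategy coincides with the paper's: a first solution inside a sub--supersolution ``box'' carrying degree $1$, degree $0$ on a large ball via a homotopy in the parameter up to some $t_1>\alpha(f_1)$ where Theorem \ref{T1} forbids solutions, excision to produce a second solution, and a limiting argument with $t_n\uparrow\alpha(f_1)$ for the threshold case; parts of this (the degree-zero computation and part (ii)) match the paper essentially verbatim. One mischaracterization: the growth condition $g(s)/s^{\sigma}\to0$ is not what makes $T_t$ compact (compactness comes from the decay of the Newtonian kernel under $(P_1)$--$(P_2)$ and the compact embedding $D^{1,2}(\R^N)\hookrightarrow L^2_P(\R^N)$); its role is in the a priori estimate for $\int_{\R^N}|\nabla u^+|^2dx$, via the H\"older splitting with exponent $\beta=(\sigma-1)/\sigma$ against the weight $\phi_1^{-\gamma}$ and the two-sided bound $C_1\le|x|^{N-2}\phi_1(x)\le C_2$.

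The genuine gap is in the sentence ``the standard order-interval argument yields $\deg(I-T_t,\mathcal{O},0)=1$'' with $\mathcal{O}=\{u\in X:\underline{u}<u<\overline{u}\}$. Over $\R^N$ this set is \emph{not open} in $X$ (nor in $L^{\infty}$): since $\underline{u}$, $\overline{u}$ and $u$ all vanish at infinity, the pointwise strict inequalities leave no uniform gap, so no ball around $u$ stays inside $\mathcal{O}$ and the degree on $\mathcal{O}$ is not even defined. This is exactly the point where the bounded-domain argument (openness of the order interval in $C^1$ via the positive outer normal derivative) breaks down, and it is the difficulty the paper identifies as its main technical contribution. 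The paper's fix is twofold: (a) work in $E_0=\{v\in C(\R^N):\sup_{x}|x|^{N-2}|v(x)|<\infty\}$ and enlarge $\mathcal{O}$ with the decay conditions $\liminf_{|x|\to\infty}|x|^{N-2}(u-\underline{u})>0$ and $\liminf_{|x|\to\infty}|x|^{N-2}(\overline{u}-u)>0$, which makes $\mathcal{O}$ open, bounded and convex; (b) rather than invoking order-preservation of $T_t$ abstractly, truncate $g$ between $\underline{u}$ and $\overline{u}$ to get a bounded increasing $\overline{g}$, show via the representation formula and the strict comparison result of \cite{Alves-Lima-Souto} that the truncated operator maps \emph{all} of $E_0$ into $\mathcal{O}$ (this is where monotonicity of $g$ is actually used), and then homotope to a constant map $\psi\in\mathcal{O}$ using convexity to get degree $1$, transferring the degree back to $T_t$ since the two operators agree on $\overline{\mathcal{O}}$. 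Without step (a) your excision $\deg(I-T_t,B_R\setminus\overline{\mathcal{O}},0)=-1$ has no meaning, so you need to supply this construction (or an equivalent one) to close the argument.
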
 

The paper is organized as follows: In Section $2$  we present some preliminaries results that play important role throughout the work. In Section $3$ we ensure the existence of sub and super-solution to $(P)$, and we use the sub-super solution method to prove the Theorem \ref{T1}. In section $4$ we obtain a priori estimate to the solutions of $(P)$, while in Section $5$ we define the solution operator associated to $(P)$ and study some properties of this operator. Finally, in Section $6$, we prove the Theorem \ref{T2}.

\noindent \textbf{Notations}
\begin{itemize}

	\item $B_{r}(x)$ denotes the ball centered at the $x$ with radius $r>0$ in $\R^N$.
	
	\item $L^{s}(\R^N)$, for $1\leq s\leq\infty$, denotes the Lebesgue space with the usual norm $|u|_s$. 
	
	\item $L^{2}_{H}(\R^N)$ denotes the class of real valued Lebesgue measurable functions $u$ such that
	$$
	\int_{\R^N}H(x)|u(x)|^{2}dx<\infty.
	$$
	$L^{2}_{H}(\R^N)$ is a Hilbert space endowed with the inner product
	$$
	(u,v)_{2,H}=\int_{\R^N}H(x)u(x)v(x)dx, \quad \forall  u,v\in L^{2}_{H}(\R^N).
	$$
	The norm associated with this inner product will be denoted by $|\cdot|_{2,H}$.
	
	\item $D^{1,2}(\R^N)$ denotes the Sobolev space endowed with inner product
	\begin{equation*}
	(u,v)_{1,2}=\int_{\mathbb{R}^N}\nabla u\nabla v dx,\quad u,v\in D^{1,2}(\R^N).
	\end{equation*}
	The norm associated with this inner product will be denoted by $|\cdot|_{1,2}$.

	\item $H^{1}(\R^N)$ denotes the Sobolev space endowed with inner product
	\begin{equation*}
	(u,v)_{H^1}=\int_{\mathbb{R}^N}\nabla u\nabla v dx+\int_{\mathbb{R}^N} u v dx,\quad u,v\in H^{1}(\R^N).
	\end{equation*}
	The norm associated with this inner product will be denoted by $|\cdot|_{H^1}$.
	
	\item We denote by $E$ the Banach space given by
	\begin{equation*}
	E:=\{u\in C(\R^N);\quad \sup_{x\in\R^N}|u(x)|<\infty\}
	\end{equation*}
	endowed with the norm $|\cdot|_{\infty}$.
	
	\item If $u$ is a mensurable function, we denote by $u^{+}$ and $u^{-}$  the positive and negative part of $u$ respectively, which are given by
	$$
	u^{+}=\max\{u,0\} \quad \mbox{and} \quad u^{-}=\min\{u,0\}.
	$$	
	
	\item $C,C_1,\cdots,C_n$ are positive constants.
	
\end{itemize}

\section{Preliminaries Results}

We start this section proving a version of sub and super-solution method to the problem $(P)$. We say $u \in D^{1,2}(\R^N)$ is a sub-solution to problem $(P)$ if $\lim_{|x| \to +\infty}u(x)=0$ and
$$\int_{\R^N}\nabla u \nabla \varphi dx \le \int_{\R^N}P(x)\left(g(u)+f(x)\right)\varphi dx,\quad \forall  \varphi\in C_0^{\infty}(\R^N) \quad \mbox{and} \quad \varphi\ge 0. $$
Similarly $u \in D^{1,2}(\R^N)$ is a super-solution to $(P)$ if the above reverse inequality is valid.
 
 \begin{pro}\label{p2}
 	Suppose that problem $(P)$ has a sub-solution $\underline{u}$ and a super-solution $\overline{u}$, with $-\infty<\underline{c}\le \underline{u} \le \overline{u}\le \overline{c}<+\infty.$ Then problem $(P)$ has a solution $u\in D^{1,2}(\R^N)$ such that $\underline{u} \le u\le \overline{u}$. 
 \end{pro}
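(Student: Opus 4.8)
The plan is to run a monotone iteration scheme built on a shifted linear operator, which substitutes for the compactness arguments available on bounded domains. Since $g\in C^1(\R)$ and every iterate will be confined to the order interval $[\underline{u},\overline{u}]\subset[\underline{c},\overline{c}]$, I first fix a constant $K>0$ large enough that $s\mapsto Ks+g(s)$ is nondecreasing on $[\underline{c},\overline{c}]$ (it suffices to take $K\ge-\min_{[\underline{c},\overline{c}]}g'$). For each $w$ with $\underline{u}\le w\le\overline{u}$ I let $u=T(w)$ be the solution of the linear problem $-\Delta u+KP(x)u=P(x)\big(Kw+g(w)+f\big)$ in $\R^N$, $u\in D^{1,2}(\R^N)$, and the goal is to produce a fixed point of $T$ trapped between $\underline{u}$ and $\overline{u}$.

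First I would check that $T$ is well defined. The bilinear form $a(u,v)=\int_{\R^N}\nabla u\nabla v\,dx+K\int_{\R^N}Puv\,dx$ is coercive on $D^{1,2}(\R^N)$ since $a(u,u)\ge|u|_{1,2}^2$, and it is continuous because the variational characterization \eqref{lambda1} yields the continuous embedding $D^{1,2}(\R^N)\hookrightarrow L^2_P(\R^N)$. A useful observation is that $(P_1)$ forces $P\in L^1(\R^N)$: indeed $\int_{|x|\ge1}P\le\int_{|x|\ge1}|x|^2P<\infty$, while $P\in L^\infty$ controls the integral over $B_1(0)$. Hence for $w\in[\underline{u},\overline{u}]$ the datum $h_w:=Kw+g(w)+f$ is bounded, and the functional $v\mapsto\int_{\R^N}Ph_w v\,dx$ is continuous on $D^{1,2}(\R^N)$ by Cauchy--Schwarz in $L^2_P(\R^N)$ together with $P\in L^1(\R^N)$. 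Lax--Milgram then delivers a unique $T(w)\in D^{1,2}(\R^N)$.

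Next I would establish the order relations through the weak maximum principle for $-\Delta+KP$: if $z\in D^{1,2}(\R^N)$ satisfies $-\Delta z+KPz\le0$ weakly, testing against $z^{+}\ge0$ gives $\int_{\R^N}|\nabla z^{+}|^2+K\int_{\R^N}P(z^{+})^2\le0$, whence $z\le0$. Subtracting the equation for $T(\underline{u})$ from the sub-solution inequality for $\underline{u}$ produces such a $z$ and yields $\underline{u}\le T(\underline{u})$; symmetrically $T(\overline{u})\le\overline{u}$; and the same comparison gives $T(w_1)\le T(w_2)$ whenever $\underline{u}\le w_1\le w_2\le\overline{u}$, which is exactly where the monotonicity $h_{w_1}\le h_{w_2}$ from the choice of $K$ is used. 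Setting $u_0=\underline{u}$ and $u_{n+1}=T(u_n)$, an induction then gives $\underline{u}=u_0\le u_1\le\cdots\le\overline{u}$, so $\{u_n\}$ converges pointwise a.e. to some $u$ with $\underline{u}\le u\le\overline{u}$.

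Finally I would pass to the limit. Testing the equation for $u_{n+1}$ against $u_{n+1}$ and using $|u_n|\le M:=\max\{|\underline{c}|,|\overline{c}|\}$ together with $P\in L^1(\R^N)$ bounds $\{u_n\}$ in $D^{1,2}(\R^N)$, so $u_n\rightharpoonup u$ in $D^{1,2}(\R^N)$ along a subsequence. In the weak formulation $\int\nabla u_{n+1}\nabla\varphi+K\int Pu_{n+1}\varphi=\int P(Ku_n+g(u_n)+f)\varphi$ the first term passes by weak convergence and the remaining terms by dominated convergence, since $P\varphi\in L^1(\R^N)$, the $u_n$ are uniformly bounded, and $g(u_n)\to g(u)$ pointwise; the shifted contributions $K\int Pu\varphi$ cancel, leaving $\int\nabla u\nabla\varphi=\int P(g(u)+f)\varphi$ for every $\varphi\in C_0^\infty(\R^N)$. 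The decay $\lim_{|x|\to\infty}u(x)=0$ then follows by squeezing $u$ between $\underline{u}$ and $\overline{u}$, both of which vanish at infinity by the very definition of sub- and super-solution. I expect the main obstacle to be precisely this loss of compactness in $\R^N$: there is no compact Sobolev embedding to lean on, so the monotonicity generated by the iteration, rather than compactness, must drive the passage to the limit, and some care is needed to confirm that the weak-$D^{1,2}$ limit coincides with the pointwise limit and that the order interval genuinely traps all iterates.
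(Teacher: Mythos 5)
Your proof is correct, but it takes a genuinely different route from the paper's. The paper truncates the nonlinearity at $\underline{u}$ and $\overline{u}$, observes that the resulting energy functional $I(u)=\frac{1}{2}\int_{\R^N}|\nabla u|^2dx-\int_{\R^N}P(x)\tilde{H}(x,u)dx$ is coercive and weakly lower semicontinuous on $D^{1,2}(\R^N)$ (this is where the compact embedding $D^{1,2}(\R^N)\hookrightarrow L^2_P(\R^N)$ enters), takes a global minimizer of the truncated problem, and then traps it in the order interval by testing with $(\underline{u}-u)^+$ and $(u-\overline{u})^+$ --- essentially the same comparison you run, but applied once at the end rather than at every step of an iteration. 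Your monotone iteration with the shifted operator $-\Delta+KP(x)$ replaces the variational compactness by the order structure: the monotone, uniformly bounded sequence $u_n$ converges pointwise, dominated convergence (using $P\in L^1(\R^N)$, which you correctly extract from $(P_1)$ together with the continuity of $P$ near the origin) passes the right-hand side to the limit, and weak $D^{1,2}$-compactness of the bounded iterates handles the gradient term. What your approach buys is independence from the compactness of the embedding $D^{1,2}(\R^N)\hookrightarrow L^2_P(\R^N)$ and a constructive, ordered sequence converging to the minimal solution above $\underline{u}$; what it costs is the auxiliary shift $K\ge-\min_{[\underline{c},\overline{c}]}g'$ (harmless here, since $g\in C^1$ and the order interval is bounded) and the two points of care you already flag: identifying the weak $D^{1,2}$ limit with the a.e.\ limit (standard, since $u_n\rightharpoonup u$ in $L^{2^*}(\R^N)$ together with a.e.\ convergence pins down the limit), and extending the sub/super-solution inequalities from nonnegative $C_0^{\infty}$ test functions to nonnegative $D^{1,2}$ test functions by density --- a step the paper also uses implicitly when it tests with $(\underline{u}-u)^+$. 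Both arguments obtain the decay at infinity the same way, by squeezing $u$ between $\underline{u}$ and $\overline{u}$.
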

 \begin{proof}
 	Firstly, we fix $h(x,s)=g(s)+f(x)$ and define the function 
 $$
 \tilde{h}(x,t)=\left \{\begin{array}{lcl}
 	h(x,\overline{u}), \ \mbox{if}\ t \ge \overline{u}(x),\\
 	h(x,t), \ \mbox{if}\  \underline{u}(x)<t<\overline{u}(x),\\
 	h(x,\underline{u}), \ \mbox{if}\ t \le \underline{u}(x).
 	\end{array} \right.
$$ 
Using the function $\tilde{h}$, we fix the auxiliary problem below 	
$$
\left \{\begin{array}{lcl}
 -\Delta u=P(x)\tilde{h}(x,u),\ \mbox{in}\ \R^N\\
 u\in D^{1,2}(\R^N).
 \end{array} \right.
\eqno{(P)_A}
$$	
Associated with $(P)_A$, we have the energy functional $I:D^{1,2}(\R^N) \rightarrow \R$ given by 
$$
I(u)=\frac{1}{2}\int_{\R^N}|\nabla u|^2dx-\int_{\R^N}P(x)\tilde{H}(x,u)dx,
$$
where $\tilde{H}(x,t)=\int\limits_{0}^{t}\tilde{h}(x,s)ds$. 
 
 	Using Minimization Methods, it is easy to see that $I$ has a global minimum $u \in D^{1,2}(\mathbb{R}^N)$, which must satisfy
 $$
 \int_{\R^N}\nabla u \nabla \varphi dx=\int_{\R^N}P(x)\tilde{h}(x,u)\varphi dx, \quad \forall \varphi \in D^{1,2}(\R^N).
 $$
Considering $\varphi=(\underline{u}-u)^+,$ we obtain
 	$$\int_{\R^N}\nabla u \nabla (\underline{u}-u)^+ dx=\int_{\R^N}P(x)h(x,\underline{u})(\underline{u}-u)^+ dx\ge \int_{\R^N}\nabla \underline{u} \nabla (\underline{u}-u)^+ dx,$$ 
and so, 
 $$
 \int_{\mathbb{R}^N}|\nabla \underline{u}-\nabla u|^{2} \, dx \leq 0,
 $$
 from where it follows that $\underline{u}\le u$. Analogously, we can ensure that $u\le \overline{u}$, and thus $u$ is a solution to $(P)$ with $\underline{u} \le u\le \overline{u}$.
 \end{proof}
%
Now, we will study the regularity of solutions of the problem 
$$
\left\{ \begin{array}{lcl}
-\Delta u=P(x)h(x,u),\ \mbox{in}\ \R^N,\\
u\in D^{1,2}(\R^N),
\end{array}
\right.\eqno(Q)
$$
where $h$ is a sub-critical function. This result plays an important role throughout the article.
\begin{lemma}\label{reg}
Let $u \in D^{1,2}(\R^N)$ be a solution to $(Q)$ with $h:\R^N\times\R \rightarrow \R$ being  a $C^1-$function satisfying
$$
|h(x,t)|\leq C(1+|t|^p), \quad \forall (x,t) \in \mathbb{R}^N \times \mathbb{R}, 
$$
for some $p \in (1,2^*-1)$. Then $u \in C(\R^N)\cap L^\infty(\R^N)$.
\end{lemma}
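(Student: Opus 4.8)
The plan is to first upgrade the integrability of $u$ step by step until it becomes bounded, and then invoke interior elliptic regularity to gain continuity. Since $u\in D^{1,2}(\R^N)$, the Sobolev embedding $D^{1,2}(\R^N)\hookrightarrow L^{2^*}(\R^N)$ already gives $u\in L^{2^*}(\R^N)$. Moreover, assumption $(P_1)$ forces $P\in L^{\infty}(\R^N)$ (indeed $|x|^2P\in L^\infty$ controls $P$ for $|x|\ge 1$, while continuity bounds $P$ on compact sets), so the right-hand side $P(x)h(x,u)$ is pointwise dominated by $C\bigl(1+|u|^{p}\bigr)$ up to the bounded factor $P$. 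The whole argument is driven by the strict subcriticality $p<2^*-1$.

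The core is a bootstrap (Moser/Brezis--Kato type) iteration. For $\beta\ge 1$ and $L>0$ I would test the weak formulation of $(Q)$ with the truncated function $\varphi=u\,\min(|u|,L)^{2(\beta-1)}$, use the growth bound $|h(x,u)|\le C(1+|u|^p)$ together with $P\in L^\infty$ on the right-hand side, and apply the Sobolev inequality on the left-hand side. Letting $L\to\infty$ yields a recursive estimate of the form $|u|_{q_{n+1}}\le (C\beta_n)^{1/\beta_n}|u|_{q_n}$ for a sequence $q_n\to\infty$. Equivalently, in terms of elliptic estimates, if $u\in L^{q}(\R^N)$ then $f:=P\,h(x,u)\in L^{q/p}(\R^N)$, so that $u$ gains the exponent $q\mapsto \frac{Nq/p}{N-2q/p}$ whenever $q/p<N/2$. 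A direct computation shows this map strictly increases precisely when $q>(p-1)N/2$, and the inequality $2^*>(p-1)N/2$ is equivalent to $p<2^*-1$; hence starting from $q_0=2^*$ the exponents increase by a definite amount and after finitely many steps reach $q/p\ge N/2$, which forces $u\in L^{\infty}(\R^N)$.

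Once $u\in L^{\infty}(\R^N)$, the growth hypothesis gives $|P\,h(x,u)|\le C\bigl(1+|u|_\infty^p\bigr)P$, so $P\,h(x,u)\in L^{\infty}(\R^N)\cap L^{1}(\R^N)$ (the $L^1$ bound again from $(P_1)$). Interior $L^r$ elliptic estimates then place $u$ in $W^{2,r}_{loc}(\R^N)$ for every finite $r$, and the Morrey embedding $W^{2,r}_{loc}\hookrightarrow C^{0,\alpha}_{loc}$ for $r>N/2$ yields $u\in C(\R^N)$; alternatively one may represent $u$ as the Newtonian potential of $P\,h(x,u)$ and read off continuity directly from the convolution of an $L^1\cap L^\infty$ datum with the kernel $|x|^{-(N-2)}$. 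This completes $u\in C(\R^N)\cap L^\infty(\R^N)$.

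The main obstacle is that we work on all of $\R^N$, where $u$ need not lie in $L^2$ (only in $D^{1,2}\hookrightarrow L^{2^*}$), so all integrability gains must be \emph{global} rather than merely local; the decisive point that makes the iteration terminate in finitely many steps is the strict subcriticality $p<2^*-1$, which guarantees the exponent map is expanding at $q=2^*$. A secondary technical care is the rigorous justification of the truncated test functions and the passage $L\to\infty$, which is what keeps the nonlinear term absorbable throughout the iteration.
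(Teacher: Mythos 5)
Your argument is correct, but it takes a genuinely different route from the paper's. The paper does not run a global bootstrap: it invokes the local result of Rabinowitz cited as Proposition 2.15 to obtain, in one stroke, the continuity of $u$ and a uniform local estimate $\|u\|_{C(\overline{B_1(z)})}\le \Upsilon\bigl(|u|_{L^{2^*}(B_2(z))}\bigr)$ with $\Upsilon$ continuous and independent of the center $z$; global boundedness then follows because $|u|_{L^{2^*}(B_2(z))}\to 0$ as $|z|\to\infty$, so $u\le \Upsilon(0)+\varepsilon$ outside a large ball, while on the remaining compact set $u$ is bounded by continuity. Your proof instead upgrades the \emph{global} integrability $L^{2^*}\to L^{q_1}\to\cdots\to L^{\infty}$ by a Moser/Brezis--Kato iteration and only afterwards derives continuity from interior elliptic estimates or the Newtonian potential. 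Both arguments turn on the same subcriticality $p<2^*-1$ and on $P\in L^{\infty}$. The paper's route is more economical: continuity and the sup bound come packaged in the cited proposition, and only the qualitative vanishing of the local $L^{2^*}$ tails is used. Your route is self-contained and yields quantitative global $L^{q}$ bounds for every $q$, but it genuinely leans on the weight: the constant term in $|h(x,t)|\le C(1+|t|^{p})$ is not integrable over $\R^N$ on its own, so you must use $P\in L^{1}(\R^N)\cap L^{\infty}(\R^N)$ (which follows from $(P_1)$ together with the continuity of $P$ near the origin) both to keep the right-hand side on the global Lebesgue scale at each step and, at the final step, to supply the decay at infinity needed to bound the Riesz potential of a function that is merely in $L^{s}$ with $s>N/2$. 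With those two uses of the weight made explicit, your argument closes.
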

\begin{proof}
Since $u \in D^{1,2}(\R^N)$, we have $u \in L^{2^*}(\R^N)$, and thus, $u\in L^{s}(B)$ for each ball $B \subset \R^N$ and $s \in [1,2^*].$ Arguing as in \cite[Proposition 2.15]{Rabinowitz}, we deduce that $u \in C(\R^N)$ and there is a continuous $\Upsilon:\R \rightarrow \R$ such that 
\begin{equation}\label{eq03}
	\|u\|_{C(\overline{B_1(z)})}\le \Upsilon(|u|_{L^{2^*}(B_2(z))}), \ \mbox{for all}\ z \in \R^N.
\end{equation}
Since $u \in L^{2^*}(\R^N),$ given $\delta>0$ there exists $R>0$ such that 
$$|u|_{L^{2^*}(B_2(z))}<\delta,\ \mbox{for }|z|\geq R.$$
As $\Upsilon$ is a continuous function, given $\varepsilon>0$ there is $\delta > 0$ such that
$$
\Upsilon(|u|_{L^{2^*}(B_2(z))})<\Upsilon(0)+\varepsilon,\ \mbox{whenever}\ |u|_{L^{2^*}(B_2(z))}<\delta.
$$
This combined with (\ref{eq03}) gives
$$
u(z)\le \|u\|_{C(\overline{B_1(z)})}\le \Upsilon(|u|_{L^{2^*}(B_2(z))})\le \Upsilon(0)+\varepsilon, \quad \mbox{for} \quad |z|\geq R,
$$
showing that $u \in L^{\infty}(\R^N).$
\end{proof}

Before finishing this section, we present the following comparison principle .  
\begin{lemma}\label{max}
	Let $u\in D^{1,2}(\R^N)$ be a function verifying
	\begin{equation}\label{Imax}
	-\Delta u-\underline{\mu}P(x)u\ge 0,\mbox{ in } \mathbb{R}^N.
	\end{equation}
Then, $u\ge0$.
\end{lemma}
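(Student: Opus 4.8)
The plan is to exploit the variational characterization \eqref{lambda1} of $\lambda_1$ together with the strict gap $\underline{\mu}<\lambda_1$, by testing the differential inequality against the negative part of $u$. First I would read \eqref{Imax} in its weak form: since $u\in D^{1,2}(\R^N)$, the inequality means
\[
\int_{\R^N}\nabla u\nabla\varphi\,dx \ge \underline{\mu}\int_{\R^N}P(x)u\varphi\,dx,\qquad \forall\,\varphi\in D^{1,2}(\R^N),\ \varphi\ge 0,
\]
where passing from nonnegative test functions in $C_0^\infty(\R^N)$ to nonnegative elements of $D^{1,2}(\R^N)$ is justified by density.

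Next I would insert the test function $\varphi=-u^-=\max\{-u,0\}\ge 0$. This is admissible because $u^-\in D^{1,2}(\R^N)$ whenever $u\in D^{1,2}(\R^N)$, and the chain rule for Sobolev functions gives $\nabla u^-=\chi_{\{u<0\}}\nabla u$. Using these facts, the left-hand side becomes $-\int_{\R^N}|\nabla u^-|^2\,dx$, while on the right-hand side one has $u\varphi=-uu^-=-(u^-)^2$ (indeed $uu^-=(u^-)^2$ on $\{u<0\}$ and both quantities vanish on $\{u\ge 0\}$), so it becomes $-\underline{\mu}\int_{\R^N}P(x)(u^-)^2\,dx$. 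After rearranging the signs this yields
\[
\int_{\R^N}|\nabla u^-|^2\,dx \le \underline{\mu}\int_{\R^N}P(x)(u^-)^2\,dx.
\]

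At this point I would invoke \eqref{lambda1} applied to $v=u^-\in D^{1,2}(\R^N)$, namely $\int_{\R^N}|\nabla u^-|^2\,dx\ge \lambda_1\int_{\R^N}P(x)(u^-)^2\,dx$. Combining the two estimates gives $(\lambda_1-\underline{\mu})\int_{\R^N}P(x)(u^-)^2\,dx\le 0$; since $\lambda_1-\underline{\mu}>0$ and $P\ge 0$, it follows that $\int_{\R^N}P(x)(u^-)^2\,dx=0$, and then the displayed estimate forces $\int_{\R^N}|\nabla u^-|^2\,dx=0$. Hence $u^-$ has vanishing gradient, so it is constant; as the only constant lying in $D^{1,2}(\R^N)$ for $N\ge 3$ is zero (a nonzero constant fails to belong to $L^{2^*}(\R^N)$), we conclude $u^-\equiv 0$, that is, $u\ge 0$.

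I expect the delicate points to be bookkeeping rather than conceptual: verifying that $-u^-$ is a legitimate test function (the density extension of the weak inequality to nonnegative $D^{1,2}(\R^N)$ functions, together with $u^-\in D^{1,2}(\R^N)$ and the Sobolev chain rule), and the final step ruling out a nonzero constant negative part via the embedding $D^{1,2}(\R^N)\hookrightarrow L^{2^*}(\R^N)$. The core mechanism---coercivity of the form $v\mapsto\int_{\R^N}|\nabla v|^2\,dx-\underline{\mu}\int_{\R^N}P(x)v^2\,dx$ guaranteed precisely by $\underline{\mu}<\lambda_1$---is exactly the gap that makes the comparison principle hold.
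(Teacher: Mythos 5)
Your proof is correct and follows essentially the same route as the paper: testing the weak inequality with $-u^{-}$ and playing the resulting estimate off against the variational characterization \eqref{lambda1} of $\lambda_1$, using the gap $\underline{\mu}<\lambda_1$. The only cosmetic difference is that the paper phrases the conclusion as a contradiction (assuming $u^{-}\neq 0$ and deducing $\underline{\mu}\ge\lambda_1$), whereas you argue directly that $\int_{\R^N}|\nabla u^{-}|^{2}\,dx=0$ and hence $u^{-}\equiv 0$; both are the same mechanism.
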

\begin{proof}
	Since $u\in D^{1,2}(\R^N)$ verifies (\ref{Imax}), then
$$
\int_{\R^N}\nabla u \nabla \phi dx - \int_{\R^N}\underline{\mu}P(x)u\phi dx \ge 0, \quad 
$$
for all $\phi\in D^{1,2}(\R^N)$ with $\phi\ge0$ in $\R^N$. If $u^-\neq0$,  considering $\phi=-u^-$, we derive that
	$$-\int_{\R^N}|\nabla u^-|^2dx + \int_{\R^N}\underline{\mu}P(x)|u^-|^2 dx \ge 0,$$
	or yet,
	$$\underline{\mu}\ge \frac{\int_{\R^N}|\nabla u^-|^2dx}{\int_{\R^N}P(x)|u^-|^2 dx }\ge \lambda_1,$$
	which contradicts the hypothesis $\underline{\mu}<\lambda_1.$ Thereby, $u=u^+\ge 0.$ 
\end{proof}

\section{Proof of the Theorem \ref{T1}}

We start this section by  showing  that the set of the numbers $t$ for which $(\tilde{P})$ has solution is upper bounded.
\begin{lemma}\label{l2}
	Assume $(G_1)$ and $(P_1)$. Then, there exists a number $\tau_*,$ which does not depend on $f_1\in \mathcal{N}^{\perp},$ such that for all $t>\tau_*$ problem $(\tilde{P})$ has no solution.
\end{lemma}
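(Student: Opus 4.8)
The plan is to assume that $u$ is a solution of $(\tilde{P})$ for a given $t$ and to extract from it an upper bound on $t$ that depends neither on $f_1$ nor on $u$ itself, by testing the equation against the positive first eigenfunction $\phi$. This is the whole-space analogue of the classical Ambrosetti--Prodi non-existence argument.

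First I would test the weak formulation of $(\tilde{P})$ with $\varphi=\phi$ and use that $\phi$ solves $-\Delta\phi=\lambda_1 P\phi$, so that $\int_{\R^N}\nabla u\nabla\phi\,dx=\lambda_1\int_{\R^N}P\phi u\,dx$. Together with the normalization $\int_{\R^N}P\phi^2\,dx=1$ and the orthogonality $\int_{\R^N}Pf_1\phi\,dx=0$ (which holds since $f_1\in\mathcal{N}^{\perp}$), this yields the identity
$$t=\lambda_1\int_{\R^N}P\phi u\,dx-\int_{\R^N}Pg(u)\phi\,dx.$$
Here I must justify that $\phi$ is an admissible test function and that every integral converges; this is exactly where the setting differs from the bounded-domain case, and I would rely on $(P_1)$, the decay estimate (\ref{m1}) for $\phi$, and the boundedness of $u$ (via Lemma \ref{reg}, or as part of the solution concept) so that $g(u)$ is bounded and $\int_{\R^N}P\phi\,dx<\infty$.

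Next I would feed in the two lower bounds (\ref{eq01}) and (\ref{eq02}). Writing $I=\int_{\R^N}P\phi u\,dx$ and $S=\int_{\R^N}P\phi\,dx$, each inequality $g(s)\ge\mu s-\Theta$ with $\mu\in\{\underline{\mu},\overline{\mu}\}$ turns the identity into $t\le(\lambda_1-\mu)I+\Theta S$. The choice $\mu=\underline{\mu}$, for which $\lambda_1-\underline{\mu}>0$, produces the lower bound $I\ge(t-\Theta S)/(\lambda_1-\underline{\mu})$, whereas $\mu=\overline{\mu}$, for which $\lambda_1-\overline{\mu}<0$, produces $t\le(\lambda_1-\overline{\mu})I+\Theta S$. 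Setting $\rho=(\lambda_1-\overline{\mu})/(\lambda_1-\underline{\mu})<0$ and multiplying the lower bound for $I$ by the negative factor $\lambda_1-\overline{\mu}$ (which reverses the inequality), substitution gives $t\le\rho(t-\Theta S)+\Theta S$, i.e. $t(1-\rho)\le\Theta S(1-\rho)$. Since $1-\rho>0$, I conclude $t\le\Theta\int_{\R^N}P\phi\,dx=:\tau_*$. This $\tau_*$ involves only $\Theta$, $P$ and $\phi$, so it is independent of $f_1$, and it is finite by $(P_1)$; hence $(\tilde{P})$ has no solution once $t>\tau_*$.

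The main obstacle is the rigor of the first step rather than the algebra of the last: on a bounded domain the pairing of the equation with $\phi$ and the integration by parts are immediate, but on $\R^N$ one must verify the admissibility of $\phi$ as a test function and the finiteness of $\int_{\R^N}P\phi u\,dx$, $\int_{\R^N}Pg(u)\phi\,dx$ and $\int_{\R^N}P\phi\,dx$. I expect these to follow from $(P_1)$, the two-sided decay (\ref{m1}) of $\phi$, and the $L^\infty$-bound on $u$, but they are the genuinely new point compared with the literature.
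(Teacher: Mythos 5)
Your proposal is correct and follows essentially the same route as the paper: test the equation with the positive eigenfunction $\phi_1$, use the normalization and the orthogonality $\int_{\R^N}P f_1\phi\,dx=0$ to isolate $t$, and then exploit both linear lower bounds (\ref{eq01}) and (\ref{eq02}) to conclude $t\le\Theta\int_{\R^N}P\phi\,dx$. The only (harmless) difference is that you eliminate $I=\int_{\R^N}P\phi u\,dx$ algebraically, whereas the paper splits into the cases $I\ge 0$ and $I\le 0$; your explicit attention to the finiteness of $\int_{\R^N}P\phi\,dx$ (with the weight $P$, which is what makes the bound finite under $(P_1)$) is in fact slightly more careful than the paper's statement of $\tau_*$.
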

\begin{proof}
	If $u \in D^{1,2}(\R^N)$ is a solution to $(\tilde{P})$, then
	$$\int_{\R^N}\nabla u \nabla \phi dx=\int_{\R^N}P(x)\left(g(u)+f(x)+t\phi_1\right)\phi dx,\ \forall \phi \in D^{1,2}(\R^N).$$
	Considering $\phi=\phi_1$, we find
	$$\int_{\R^N}\nabla u \nabla \phi_1 dx=\int_{\R^N}P(x)g(u)\phi_1 dx+t$$
and
	$$\int_{\R^N}\lambda_1P(x) u \phi_1 dx=\int_{\R^N}P(x)g(u)\phi_1dx+t.$$
	From (\ref{eq01}) and (\ref{eq02}), 
	$$\int_{\R^N}\lambda_1P(x) u \phi_1 dx\geq\underline{\mu}\int_{\R^N}P(x)u\phi_1 dx-\Theta\int_{\R^N} \phi_1dx +t$$
	and
	$$\int_{\R^N}\lambda_1 P(x)u \phi_1 dx\geq\overline{\mu}\int_{\R^N}P(x)u\phi_1 dx-\Theta\int_{\R^N} \phi_1dx +t,$$
	from where it follows that 
	$$t\leq(\lambda_1-\overline{\mu})\int_{\R^N}P(x)u\phi_1 dx+\Theta\int_{\R^N} \phi_1dx\le \Theta\int_{\R^N} \phi_1dx,\ \mbox{if}\ \int_{\R^N}P(x)u\phi_1 dx\ge0$$
	and
	$$t\leq(\lambda_1-\underline{\mu})\int_{\R^N}P(x)u\phi_1 dx+\Theta\int_{\R^N} \phi_1dx\le \Theta\int_{\R^N} \phi_1dx,\ \mbox{if}\ \int_{\R^N}P(x)u\phi_1 dx\le0.$$
	Thus, the result follows by fixing $\tau_*=C\int_{\R^N} \phi_1dx.$
\end{proof}

In order to show the existence of sub and super-solution to problem $(\tilde{P})$, we will firstly study the existence of solution for the problem below
$$
\left\{
\begin{array}{lcl}
-\Delta w= P(x)(\underline{\mu}w-\Theta+f(x)),\mbox{ in } \mathbb{R}^N\\
\lim_{|x|\to +\infty}w(x)=0,
\end{array}
\right.
\eqno{(P)_{\underline{\mu}}}
$$
where $\underline{\mu}$ and $\Theta>0$ were fixed in (\ref{eq01}) and (\ref{eq02}).
\begin{lemma}\label{l3}
For each $f \in C(\R^N)\cap L^{\infty}(\R^N)$ the problem $(P)_{\underline{\mu}}$ has a unique solution $w \in L^{\infty}(\R ^N)\cap C(\R ^N)$.
\end{lemma}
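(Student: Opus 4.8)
The plan is to read $(P)_{\underline{\mu}}$ as a linear problem and solve it variationally, then upgrade the abstract solution to the regularity and decay demanded by the definition of solution. Rewriting the equation as
\[
-\Delta w-\underline{\mu}P(x)w=P(x)\big(f(x)-\Theta\big),
\]
I look for $w\in D^{1,2}(\R^N)$ satisfying $a(w,\varphi)=L(\varphi)$ for all $\varphi\in D^{1,2}(\R^N)$, where
\[
a(u,v)=\int_{\R^N}\nabla u\nabla v\,dx-\underline{\mu}\int_{\R^N}P(x)uv\,dx,\qquad L(v)=\int_{\R^N}P(x)\big(f(x)-\Theta\big)v\,dx.
\]
Existence and uniqueness of such a $w$ will follow from the Lax--Milgram theorem, so the first task is to verify its hypotheses on $D^{1,2}(\R^N)$.

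The structural fact that makes this work is coercivity of $a$. Using the variational characterization (\ref{lambda1}) one has $\int_{\R^N}P(x)v^2\,dx\le\lambda_1^{-1}\int_{\R^N}|\nabla v|^2\,dx$, whence
\[
a(v,v)\ge\Big(1-\frac{\underline{\mu}}{\lambda_1}\Big)\int_{\R^N}|\nabla v|^2\,dx,
\]
and the factor is strictly positive because $\underline{\mu}<\lambda_1$ by (\ref{eq01}). The same inequality together with a weighted Cauchy--Schwarz inequality shows that $a$ is continuous on $D^{1,2}(\R^N)$. For the right-hand side, since $f\in L^\infty(\R^N)$ I estimate $|L(v)|\le(|f|_\infty+\Theta)\int_{\R^N}P|v|\,dx\le(|f|_\infty+\Theta)\big(\int_{\R^N}P\,dx\big)^{1/2}\big(\int_{\R^N}Pv^2\,dx\big)^{1/2}$, and condition $(P_1)$ guarantees $\int_{\R^N}P\,dx<\infty$ (integrable near the origin since $P$ is continuous, and at infinity since $|\cdot|^2P\in L^1$ forces $P\le|\cdot|^2P$ there); hence $L$ is bounded on $D^{1,2}(\R^N)$. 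Lax--Milgram then yields a unique $w\in D^{1,2}(\R^N)$ solving the weak formulation, and uniqueness in $D^{1,2}(\R^N)$ is immediate from coercivity applied to the difference of two solutions.

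It remains to promote $w$ to a genuine solution of $(P)_{\underline{\mu}}$ with the stated regularity and decay. Here $w$ solves $(Q)$ with $h(x,t)=\underline{\mu}t-\Theta+f(x)$, which satisfies $|h(x,t)|\le C(1+|t|)\le C(1+|t|^p)$ for any fixed $p\in(1,2^*-1)$ (a nonempty range since $N\ge3$), so Lemma \ref{reg} gives $w\in C(\R^N)\cap L^\infty(\R^N)$. The step I expect to be the main obstacle is the boundary condition $\lim_{|x|\to+\infty}w(x)=0$, since the variational/regularity argument by itself only produces boundedness. To extract genuine decay I will invoke the Newtonian potential representation: because $w\in L^\infty(\R^N)$ we have $|P(y)h(y,w(y))|\le CP(y)$, so the potential $V(x)=c_N\int_{\R^N}\frac{P(y)h(y,w(y))}{|x-y|^{N-2}}\,dy$ is well defined by $(P_2)$, belongs to $D^{1,2}(\R^N)$, tends to $0$ at infinity, and satisfies $-\Delta V=P\,h(\cdot,w)$; uniqueness in $D^{1,2}(\R^N)$ then forces $w=V$, and $(P_2)$ yields the quantitative bound $|w(x)|\le C/|x|^{N-2}$, which establishes $\lim_{|x|\to+\infty}w(x)=0$ and completes the proof. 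The delicate points to check carefully are that $V$ indeed has finite Dirichlet energy and that the identification $w=V$ via uniqueness is legitimate.
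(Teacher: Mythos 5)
Your proposal is correct and follows essentially the same route as the paper: existence and uniqueness via coercivity of the form $\int|\nabla u|^2-\underline{\mu}\int P|u|^2$ (the paper phrases this as an equivalent norm plus the Riesz representation theorem, which is your Lax--Milgram step for a symmetric form), regularity via Lemma \ref{reg} with the same linear-growth bound, and decay via the Newtonian potential representation combined with $(P_2)$. The only cosmetic difference is that the paper cites \cite{Alves-Lima-Souto} for the representation formula $w(x)=C_N\int\frac{P(y)(\underline{\mu}w-\Theta+f)}{|x-y|^{N-2}}dy$, whereas you rederive it by identifying $w$ with the potential $V$ through uniqueness in $D^{1,2}(\R^N)$.
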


\begin{proof}
	Using the variational characterization of $\lambda_1$ given in (\ref{lambda1}), we have the inequality below 
	$$\underline{\mu}<\lambda_1=\inf_{v\in D^{1,2}(\R^N)\setminus\{0\}}\left\{\frac{\int_{\R^N}|\nabla v|^2dx}{\int_{\R^N}P(x)|v|^2dx}\right\},$$
	which permits to prove that 
	$$
	\|u\|_{*}=\left[\int_{\R^N}\Big(|\nabla u|^2-\underline{\mu}P(x)|u|^2\Big)dx\right]^{1/2}
	$$ 
	defines a norm in $D^{1,2}(\R^N)$ that is equivalent to usual norm. Then, by Riesz Representation Theorem, there is a solution $w\in D^{1,2}(\R^N)$ of the problem  $(P)_{\underline{\mu}}.$ As 
	$$\underline{\mu}w-\Theta+f(x)\le M(1+|w|)\le M_1(1+|w|^p),$$ 
	for some $p \in(1,2^*-1)$ and $M_1>0,$
	it follows from Lemma \ref{reg} that $w \in L^{\infty}(\R ^N)\cap C(\R ^N)$. Hence, 
	$$P(x)(\underline{\mu}w-\Theta+f(x))\le M_2P(x),\ \mbox{for some constant}\ M_2>0.$$
	From \cite{Alves-Lima-Souto},  
	$$w(x)=\int_{\R^N}\frac{C_NP(y)(\underline{\mu}w(y)-\Theta +f(y))}{|x-y|^{N-2}}dy,$$
	and so,
	$$|w(x)|\le C \left|\int_{\R^N}\frac{P(y)}{|x-y|^{N-2}}dy\right|.$$
	This together with $(P_2)$ gives  
	$$
	|w(x)|\le \frac{C}{|x|^{N-2}}, \quad \forall x \in \mathbb{R}^N \setminus \{0\},
	$$
implying that
	$$\lim_{|x|\to +\infty}w(x)=0,$$
which completes  the proof.	
\end{proof}

As a byproduct of the last lemma and (\ref{eq01}) is the following corollary 

\begin{cor} \label{C1}
The function $w$ obtained in Lemma \ref{l3} is a subsolution for $(\tilde{P})$.
\end{cor}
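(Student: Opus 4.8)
The plan is to verify the weak subsolution inequality directly from the equation that $w$ solves, using the one-sided lower bound (\ref{eq01}) on $g$. Recall that, by Lemma \ref{l3}, the function $w\in D^{1,2}(\R^N)\cap C(\R^N)\cap L^{\infty}(\R^N)$ solves $(P)_{\underline{\mu}}$ in the weak sense, so that
$$
\int_{\R^N}\nabla w\nabla\varphi\,dx=\int_{\R^N}P(x)\big(\underline{\mu}w-\Theta+f(x)\big)\varphi\,dx,\quad\forall\,\varphi\in C_0^{\infty}(\R^N),
$$
and moreover $\lim_{|x|\to+\infty}w(x)=0$. Since $(\tilde{P})$ is exactly problem $(P)$ written with $f=t\phi+f_1$, to show that $w$ is a subsolution it suffices to check, for every nonnegative $\varphi\in C_0^{\infty}(\R^N)$, the inequality
$$
\int_{\R^N}\nabla w\nabla\varphi\,dx\le\int_{\R^N}P(x)\big(g(w)+f(x)\big)\varphi\,dx.
$$

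First I would subtract the identity satisfied by $w$ from the target inequality; the terms involving $f$ cancel, and the claim reduces to
$$
\int_{\R^N}P(x)\big(\underline{\mu}w-\Theta-g(w)\big)\varphi\,dx\le 0.
$$
The key step is then the pointwise inequality (\ref{eq01}), namely $g(s)\ge\underline{\mu}s-\Theta$ for all $s\in\R$, applied with $s=w(x)$, which gives $\underline{\mu}w(x)-\Theta-g(w(x))\le 0$ for every $x\in\R^N$. Combining this with $P\ge 0$ and $\varphi\ge 0$ immediately yields the displayed inequality, and hence the desired subsolution inequality.

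There is essentially no analytic obstacle here: the corollary is a direct consequence of the equation characterizing $w$ together with the convexity-type bound (\ref{eq01}) on $g$. The only points worth recording explicitly are that the decay condition $\lim_{|x|\to+\infty}w(x)=0$ demanded in the definition of subsolution is already furnished by Lemma \ref{l3}, and that the regularity $w\in L^{\infty}(\R^N)$ guarantees $g(w)\in L^{\infty}(\R^N)$, so that all the integrals above are finite for compactly supported test functions $\varphi$.
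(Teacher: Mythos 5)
Your argument is correct and is exactly the one the paper intends: the corollary is stated as "a byproduct of the last lemma and (\ref{eq01})", i.e. one compares the equation $-\Delta w=P(x)(\underline{\mu}w-\Theta+f(x))$ with the bound $g(s)\ge\underline{\mu}s-\Theta$ to get the subsolution inequality, with the decay at infinity supplied by Lemma \ref{l3}. No further comment is needed.
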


Our next result establishes a very important relation between the function $w$ obtained in the last lemma and any super-solution of $(\tilde{P})$.

\begin{pro}\label{l4}
	Assume $(G_1)$, $(P_1)$ and $f \in C(\R^N)\cap L^{\infty}(\R^N)$. Then $w \le u$ in $\R^N$, for all possible super-solution $u\in D^{1,2}(\R^N)$ of $(\tilde{P})$.
\end{pro}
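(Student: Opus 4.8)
The plan is to reduce the claim to the comparison principle already established in Lemma~\ref{max}. Set $v:=u-w$, where $u$ is any super-solution of $(\tilde{P})$ and $w$ is the solution of $(P)_{\underline{\mu}}$ furnished by Lemma~\ref{l3}. Since both $u$ and $w$ belong to $D^{1,2}(\R^N)$ and tend to $0$ at infinity, so does $v$, and it suffices to prove that $v\ge 0$.

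First I would write the two weak formulations tested against an arbitrary $\varphi\in C_0^\infty(\R^N)$ with $\varphi\ge 0$: the super-solution inequality
$$\int_{\R^N}\nabla u\nabla\varphi\,dx\ge\int_{\R^N}P(x)\big(g(u)+f(x)\big)\varphi\,dx,$$
together with the identity satisfied by $w$,
$$\int_{\R^N}\nabla w\nabla\varphi\,dx=\int_{\R^N}P(x)\big(\underline{\mu}w-\Theta+f(x)\big)\varphi\,dx.$$
Subtracting the second from the first cancels the terms involving $f$ and yields
$$\int_{\R^N}\nabla v\nabla\varphi\,dx\ge\int_{\R^N}P(x)\big(g(u)-\underline{\mu}w+\Theta\big)\varphi\,dx.$$
The key analytic input is inequality~(\ref{eq01}), $g(s)\ge\underline{\mu}s-\Theta$, evaluated at $s=u(x)$: this gives the pointwise bound $g(u)-\underline{\mu}w+\Theta\ge\underline{\mu}(u-w)=\underline{\mu}v$, and since $P\ge 0$ and $\varphi\ge 0$ I may integrate it to conclude
$$\int_{\R^N}\nabla v\nabla\varphi\,dx\ge\underline{\mu}\int_{\R^N}P(x)v\varphi\,dx,\qquad\forall\,\varphi\in C_0^\infty(\R^N),\ \varphi\ge 0.$$
Observe that this inequality no longer contains $g(u)$: it is precisely the weak form of $-\Delta v-\underline{\mu}P(x)v\ge 0$.

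To finish, I would extend the last inequality from $C_0^\infty$ to all nonnegative $\varphi\in D^{1,2}(\R^N)$ by density, which is legitimate because both $\varphi\mapsto\int_{\R^N}\nabla v\nabla\varphi\,dx$ and $\varphi\mapsto\int_{\R^N}P(x)v\varphi\,dx$ are continuous on $D^{1,2}(\R^N)$ (the latter because $(P_1)$ and the variational characterization (\ref{lambda1}) make $\int_{\R^N}P(x)|\cdot|^2\,dx$ a bounded quadratic form on $D^{1,2}(\R^N)$). Then $v$ satisfies hypothesis (\ref{Imax}) of Lemma~\ref{max}, so $v\ge 0$, i.e.\ $w\le u$ in $\R^N$. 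Alternatively, one can bypass Lemma~\ref{max} and test directly with $\varphi=-v^{-}=-(u-w)^{-}\ge 0$, reaching $\int_{\R^N}|\nabla v^{-}|^2\,dx\le\underline{\mu}\int_{\R^N}P(x)|v^{-}|^2\,dx$, which forces $\underline{\mu}\ge\lambda_1$ unless $v^{-}\equiv 0$. I do not expect a genuine obstacle here: the function $w$ was constructed in Lemma~\ref{l3} exactly so that, via the choice $\underline{\mu}<\lambda_1$ and (\ref{eq01}), the difference $u-w$ becomes a supersolution of $-\Delta-\underline{\mu}P$; the only steps requiring a little care are the sign bookkeeping after the subtraction and the density argument upgrading the admissible test functions from $C_0^\infty(\R^N)$ to $D^{1,2}(\R^N)$.
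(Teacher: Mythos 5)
Your argument is correct and coincides with the paper's own proof: both subtract the equation for $w$ from the super-solution inequality for $u$, apply (\ref{eq01}) at $s=u(x)$ to obtain $-\Delta(u-w)\ge\underline{\mu}P(x)(u-w)$ weakly, and conclude via Lemma~\ref{max}. Your remarks on the density upgrade of test functions and the alternative direct test with $-(u-w)^{-}$ are sound but only make explicit what the paper leaves implicit.
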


\begin{proof}
Since $w$ is a sub-solution and $u$ is a super-solution of $(\tilde{P})$ respectively, we derive that 
	\begin{equation*}\label{m11}
	\int_{\R^N} \nabla(u-w)\nabla\varphi dx\ge\int_{\R^N} \left[P(x)\Big( g(u)+f(x)\Big)- P(x)\Big(\underline{\mu}w-\Theta+f(x)\Big)\right]\varphi dx,
	\end{equation*}
	for all $\varphi\in D^{1,2}(\R^N)$ with $\varphi\ge0$ in $\R^N$. Thereby, by (\ref{eq01}),  
	\begin{equation*}\label{m2}
	\int_{\R^N} \nabla(u-w)\nabla\varphi dx\ge \int_{\R^N} \left[P(x)\Big( \underline{\mu}u-\Theta+f(x)\Big)- P(x)\Big(\underline{\mu}w-\Theta+f(x)\Big)\right]\varphi dx,
	\end{equation*}
	 that is,
	\begin{equation*}\label{m21}
	\int_{\R^N} \nabla(u-w)\nabla\varphi dx\ge\int_{\R^N} \underline{\mu}P(x)(u-w)\varphi dx,
	\end{equation*}
	for all $\varphi\in D^{1,2}(\R^N)$ with $\varphi\ge0$ in $\R^N$. Therefore, 
	\begin{equation*}
	-\Delta (u-w)\ge \underline{\mu} P(x)(u-w),\mbox{ in } \mathbb{R}^N,
	\end{equation*}
and by Lemma \ref{max}, 
	$$u \ge w \quad\mbox{in}\quad\R^N,$$
proving the desired result.		
\end{proof}
As an immediate consequence from Propositions \ref{p2} and \ref{l4}, problem $(\tilde{P})$ has a solution for $f_1 \in \mathcal{N}^{\perp}$ and $t_0 \in \R$ given, then it has also a solution for any $t<t_0.$   

For what has been presented so far, we just need prove the existence of a super-solution to $(P)$.
\begin{lemma}\label{super}
	Let $f_1 \in \mathcal{N}^{\perp}$ be given. Then, there exists $t \in \R$ such that $(\tilde{P})$ has a super-solution.
\end{lemma}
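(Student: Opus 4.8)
The goal of Lemma~\ref{super} is to produce, for a given $f_1 \in \mathcal{N}^{\perp}$, some value of $t$ for which $(\tilde{P})$ admits a super-solution. My plan is to look for a super-solution in the simplest possible form, namely a constant function, or more precisely a function built from solving a \emph{linear} auxiliary problem in which the nonlinearity $g$ is bounded above by a linear expression. The key structural observation is the companion inequality to $(\ref{eq01})$, namely $(\ref{eq02})$: $g(s) \le$ a suitable linear function of $s$ is \emph{not} what we have—$(\ref{eq02})$ reads $g(s) \ge \overline{\mu}s - \Theta$—so I must instead exploit the growth of $g$ from the other side. The natural idea is to use $(G_1)$, which controls $g(s)/s$ from above as $s \to -\infty$ and from below as $s \to +\infty$; to build a super-solution I want an upper bound on $g$, so I would first record that, because $g \in C^1$, on any bounded set $g$ is bounded, and I should aim for a super-solution that is large and negative nowhere, or better, pick a candidate whose range keeps $g$ controlled.

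Concretely, I would proceed as follows. First I would consider the linear problem
\begin{equation*}
-\Delta z = P(x)\bigl(\overline{\mu}\, z - \Theta + f(x)\bigr), \quad \lim_{|x|\to+\infty} z(x) = 0,
\end{equation*}
as the analogue of $(P)_{\underline{\mu}}$ but with $\overline{\mu} > \lambda_1$ in place of $\underline{\mu} < \lambda_1$. The trouble is that $\overline{\mu}$ lies \emph{above} $\lambda_1$, so the operator $-\Delta - \overline{\mu}P(x)$ is no longer coercive and Lemma~\ref{l3}'s argument breaks down. For this reason I expect the correct route is not to solve a linear problem at $\overline{\mu}$ but instead to choose $t$ very negative and produce an explicit super-solution. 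My main plan is therefore: take the subsolution-type construction of Lemma~\ref{l3} but replace $f$ by a modified right-hand side, and use the decomposition $f = t\phi + f_1$. Since we are free to pick $t$, I would choose $t$ sufficiently negative so that the ``source'' $t\phi + f_1$ is pushed down, and then exhibit a super-solution $\overline{u}$ by solving
\begin{equation*}
-\Delta \overline{u} = P(x)\bigl(\underline{\mu}\,\overline{u} - \Theta + t\phi + f_1\bigr),
\end{equation*}
whose solvability is guaranteed by Lemma~\ref{l3} (this is exactly $(P)_{\underline{\mu}}$ with $f = t\phi + f_1$). The point is then to verify that for $t$ negative enough this $\overline{u}$ is genuinely a super-solution of $(\tilde{P})$, i.e. that $g(\overline{u}) + t\phi + f_1 \le \underline{\mu}\,\overline{u} - \Theta + t\phi + f_1$, equivalently $g(\overline{u}) \le \underline{\mu}\,\overline{u} - \Theta$.

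The crux, and the step I expect to be the main obstacle, is controlling the sign of $g(\overline{u}) - (\underline{\mu}\,\overline{u} - \Theta)$. By $(\ref{eq01})$ we always have $g(s) \ge \underline{\mu}s - \Theta$, so the desired \emph{reverse} inequality can only hold where equality is nearly achieved, i.e. where $\overline{u}$ is very negative. Thus the real content is to show that choosing $t \ll 0$ forces $\overline{u}$ to be uniformly very negative, driving it into the region $s \to -\infty$ where, by $(G_1)$, $\limsup_{s\to-\infty} g(s)/s < \lambda_1$ and in fact one can arrange $g(s) \le \underline{\mu}s - \Theta$ for $s$ below some threshold. I would establish the requisite lower/upper bounds on $\overline{u}$ in terms of $t$ via the representation formula $\overline{u}(x) = \int_{\R^N} C_N |x-y|^{-(N-2)} P(y)(\underline{\mu}\,\overline{u}(y) - \Theta + t\phi(y) + f_1(y))\,dy$ together with $(P_2)$ and the two-sided bound $(\ref{m1})$ on $\phi_1$, which lets me compare the $t\phi$ contribution against a fixed negative multiple of $\phi$; making $t$ sufficiently negative then pushes $\overline{u}$ uniformly below the threshold where $(G_1)$ yields $g(\overline{u}) \le \underline{\mu}\,\overline{u} - \Theta$, completing the verification that $\overline{u}$ is a super-solution and finishing the proof.
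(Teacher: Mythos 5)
Your candidate super-solution cannot work, and the obstruction is exactly the one you flag and then talk yourself out of. The function solving $-\Delta \overline{u}=P(x)(\underline{\mu}\,\overline{u}-\Theta+t\phi_1+f_1)$ is precisely the function $w$ of Lemma \ref{l3} with $f=t\phi_1+f_1$, and Corollary \ref{C1} identifies it as the \emph{sub}-solution: by (\ref{eq01}) one has $g(s)\ge \underline{\mu}s-\Theta$ for \emph{every} $s\in\R$, so the verification you need, $g(\overline{u})\le \underline{\mu}\,\overline{u}-\Theta$, forces pointwise equality and is unattainable for general $g$. Your escape route --- that $(G_1)$ lets one ``arrange $g(s)\le \underline{\mu}s-\Theta$ for $s$ below some threshold'' --- misreads the hypothesis: for $s<0$ the condition $g(s)/s<\mu$ is equivalent, after multiplying by the negative number $s$, to $g(s)>\mu s$, so $(G_1)$ at $-\infty$ produces only a \emph{lower} bound on $g$. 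It is compatible with, say, $g(s)=s^2$ for $s\le 0$ (here $g(s)/s=s\to-\infty$), for which $g$ is large and positive exactly where $\underline{\mu}s-\Theta$ is large and negative. There is a second, independent obstruction: an admissible super-solution must satisfy $\lim_{|x|\to+\infty}\overline{u}(x)=0$, so $\overline{u}$ can never be ``uniformly very negative'' however negative $t$ is; near infinity $\overline{u}\approx 0$ and your inequality would require $g(0)\le-\Theta$, again contradicting (\ref{eq01}).

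The paper's proof uses no asymptotic control of $g$ at all, only local boundedness. It fixes $L>0$, sets $m=\max\{g(s)+f_1(x):x\in\R^N,\ 0\le s\le L\}$, chooses a continuous $F$ with $0\le F\le m$, $F\equiv 0$ on $B_{R_1}(0)$ and $F\equiv m$ off $B_{R_2}(0)$, and lets $v\ge 0$ solve $-\Delta v=P(x)F(x)$ with $v\to 0$ at infinity. The representation formula together with $(P_1)$, $(P_2)$ and a H\"older estimate gives $|v|_{\infty}\le L$ once $R_1$ is large, hence $g(v)+f_1\le m$ on all of $\R^N$; and since $\phi_1$ is bounded below by a positive constant on the compact set where $F$ may fall below $m$, one can take $t$ so negative that $m+t\phi_1<F$ everywhere, whence $-\Delta v=PF>P(m+t\phi_1)\ge P(g(v)+t\phi_1+f_1)$. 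The idea you are missing is to confine the candidate to a range $[0,L]$ on which $g$ is trivially bounded and to absorb the resulting constant $m$ with the term $t\phi_1$, rather than to chase a sign condition on $g(\overline{u})-\underline{\mu}\,\overline{u}+\Theta$ that (\ref{eq01}) rules out.
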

\begin{proof}
	Fix $L>0$ and define
	$$
	m=\max\{g(s)+f_1(x); x \in \R^N\ \mbox{ and }\ 0\le s \le L\}.
	$$
	Now, choose $R_2>R_1>0$ and fix $ F\in C(\mathbb{R}^N,\mathbb{R})$ with $F\equiv 0$ in $B_{R_1}(0)$, $F\equiv m$ in $\R ^N \setminus B_{R_2}(0)$ and $0\le F \le m$ in whole $\R ^N.$  Moreover, let $v$ be the solution to following problem
	$$
	\left\{
	\begin{array}{lcl}
	-\Delta v= P(x)F(x),\mbox{ in } \mathbb{R}^N\\
	\lim_{|x|\to +\infty}v(x)=0.
	\end{array}
	\right. \eqno{(P)_F}
	$$
    As $v$ is the solution to $(P)_F$, we must have
	\begin{equation}\label{r1}
	v(x)=C_N\int_{\R^N}\frac{P(y)F(y)}{|x-y|^{N-2}}dy,
	\end{equation}
	then, 
	$$\begin{array}{lcl}
	|v(x)|\le mC_N\int_{\R^N\setminus B_{R_1}(0)}\frac{P(y)}{|x-y|^{N-2}}dy\vspace{0,4cm}\\
	\,\,\,\,\,\,\,\,\,\,\,\,\le mC_N\int_{(\R^N\setminus B_{R_1}(0))\cap \overline{B}^c_1(x)}\frac{P(y)}{|x-y|^{N-2}}dy+mC_N\int_{(\R^N\setminus B_{R_1}(0))\cap \overline{B}_1(x)}\frac{P(y)}{|x-y|^{N-2}}dy\vspace{0,4cm}\\
	\,\,\,\,\,\,\,\,\,\,\,\,\le mC_N\int_{\R^N\setminus B_{R_1}(0)}P(y)dy+mC_N\int_{(\R^N\setminus B_{R_1}(0))\cap \overline{B}_1(x)}\frac{P(y)}{|x-y|^{N-2}}dy.
	\end{array}$$
	Now, choosing $1<q<\frac{N}{N-2}$, the Hölder's inequality gives 
	$$
	\int_{(\R^N\setminus B_{R_1}(0))\cap \overline{B}_1(x)}\frac{P(y)}{|x-y|^{N-2}}dy\le \Big(\int_{\R^N\setminus B_{R_1}(0)}|P|^{q'}dy\Big)^{\frac{1}{q'}}\Big(\int_{\overline{B}_1(x)}\frac{1}{|x-y|^{q(N-2)}}dy\Big)^\frac{1}{q},
	$$
	where $\frac{1}{q}+\frac{1}{q'}=1$. Therefore, 
	$$
	|v(x)|\le mC_N\int_{\R^N\setminus B_{R_1}(0)}P(y)dy+mC\Big(\int_{\R^N\setminus B_{R_1}(0)}|P|^{q'}dy\Big)^{\frac{1}{q'}}.
	$$
Since $P \in L^{1}(\mathbb{R}^N) \cap L^{q'}(\mathbb{R}^N)$, if we consider $R_1>0$ large enough, we can ensure that 
	$$|v|_{\infty}\le L.$$
From this, we claim that $v$ is a super-solution of $(\tilde{P})$ for a large negative $t$. In fact, choosing a $t$  large negative such that $m+t\phi_1< F$ in whole $\mathbb{R}^N$, we find 
	\begin{equation}\label{r2}
	-\Delta v> P(x)(m+t\phi_1)\geq P(x)(g(v)+t\phi_1+f_1(x)),\quad\mbox{in}\quad\R^N,
	\end{equation}
showing that $v$ is a super-solution for $(\tilde{P})$. Moreover, 	from (\ref{r1}) and 
	$(P_2)$, $$\lim_{|x|\to +\infty}v(x)=0.$$ 
\end{proof}

\subsection*{Proof of Theorem \ref{T1}}

For each $f_1 \in \mathcal{N}^{\perp}$, it follows from the Lemma \ref{super} that there is a $t_0\in \mathbb{R}$ such that problem $(\tilde{P})$ has a super-solution $\overline{u}$. Thus, from Corollary \ref{C1}, for these $f_1$ and $t_0$ the problem $(\tilde{P})$ has a sub-solution $w$ with $w\le \overline{u}.$ Hence, by Proposition \ref{p2}, $(\tilde{P})$ has a solution $u$ satisfying  $w \le u \le \overline{u}.$ The Lemma \ref{l2} ensures that the set of the numbers  $t$ such that $(\tilde{P})$ has a solution is bounded from above, and this set is seen to be a half-line. In order to finish this proof we define $\alpha(f_1)$ to be the supremum  of the numbers $t$ for which problem $(\tilde{P})$ has a solution.

\section{A Priori Estimate}\label{pe}
Now, we will prove the Theorem \ref{T2} via Leray-Schauder Degree Theory. To this end, our first step is to establish a priori estimate for the solutions of $(\tilde{P})$.
\begin{lemma}\label{l11}
		Assume $(G_1)$, $(P_1)$ and $(P_2)$. For each $f \in C(\R^N)\cap L^{\infty}(\R^N)$ the negative part $u^{-}$ of the solutions $u$ of $(\tilde{P})$ are uniformly bounded, that is, there exists a constant $C>0$, independent of $u$, such that $|u^{-}|_{\infty}\leq C$. Moreover, we also have that $|u^-|_{2^*}\leq|w|_{2^*}$, where $w$ was given in Lemma \ref{l3}.
\end{lemma}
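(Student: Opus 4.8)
The plan is to deduce everything from the comparison estimate already established in Proposition \ref{l4}, so that no fresh analytic machinery is needed. The key observation is that a solution of $(\tilde{P})$ is in particular a super-solution, so Proposition \ref{l4} applies verbatim and yields the pointwise bound $w \le u$ in $\R^N$ for every solution $u$ of $(\tilde{P})$, where $w$ is the function furnished by Lemma \ref{l3} (recall that by Corollary \ref{C1} this $w$ is a sub-solution of $(\tilde{P})$). This single inequality, combined with the boundedness of $w$, will give both conclusions, and the bound will automatically be independent of $u$ because $w$ depends only on the fixed datum $f$.

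Next I would convert $w \le u$ into a pointwise control of the negative part, using the convention $u^{-}=\min\{u,0\}$. On the set $\{x \in \R^N : u(x) \ge 0\}$ one has $u^{-}(x)=0$, so $|u^{-}(x)| \le |w(x)|$ holds trivially. On the set $\{x \in \R^N : u(x) < 0\}$ one has $w(x) \le u(x) < 0$; since both quantities are negative, the inequality $w(x) \le u(x) < 0$ is equivalent to $|u(x)| \le |w(x)|$, and there $|u^{-}(x)| = |u(x)|$. Hence the pointwise inequality $|u^{-}| \le |w|$ holds throughout $\R^N$, with a right-hand side that does not involve $u$.

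From here both assertions are immediate. Taking the supremum gives $|u^{-}|_{\infty} \le |w|_{\infty}=:C$, which is finite by Lemma \ref{l3} and depends only on $f$, hence is independent of the particular solution $u$. Raising $|u^{-}| \le |w|$ to the power $2^{*}$ and integrating over $\R^N$ gives $|u^{-}|_{2^{*}} \le |w|_{2^{*}}$, which is finite because $w \in D^{1,2}(\R^N) \hookrightarrow L^{2^{*}}(\R^N)$.

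I do not expect a genuine obstacle here: the entire difficulty has already been absorbed into Proposition \ref{l4}, that is, into the comparison principle of Lemma \ref{max} together with the construction of the universal sub-solution $w$ in Lemma \ref{l3}. The only point requiring a little care is the sign bookkeeping in the second step, where one must use that $w \le u < 0$ forces $w < 0$, so that passing to absolute values reverses the inequality in the correct direction; no compactness, bootstrap, or blow-up argument is needed for this estimate on $u^{-}$.
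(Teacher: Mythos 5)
Your argument is correct and coincides with the paper's own proof: both invoke Proposition \ref{l4} to get $w\le u$, observe that on $\{u<0\}$ this forces $|u^-|\le|w|$ pointwise, and then pass to the $L^\infty$ and $L^{2^*}$ norms of $w$, which depend only on $f$. Nothing further is needed.
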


\begin{proof}
	By Proposition \ref{l4}, $w\leq u$ in $\mathbb{R}^N$. Fixing $K=|w|_{\infty}$, if $u(x)<0$, we have $-K\leq w(x)\leq u(x)<0$, then $|u^-|\leq |w|$ in $\mathbb{R}^N$. From this,  $|u^-|_{\infty}\leq K$ and $|u^-|_{2^*}\leq|w|_{2^*}$.
\end{proof}

In the sequel, analogous to \cite{Djairo}, without loss of generality  we are assuming that $g$ satisfies 
\begin{equation}
g(s)\geq 0,\quad \mbox{for all}\quad s\geq 0.
\end{equation}

\begin{lemma}\label{lem3.3}
		Assume $(G_1)$, $(P_1)$ and $(P_2)$. For each $f \in C(\R^N)\cap L^{\infty}(\R^N)$ there is a constant $C>0$ such that
		\begin{equation}\label{ineq}
		\left|\int_{\R^N}P(x)g(u^+)\phi_1 dx\right|\leq C
		\end{equation}
		for all solutions $u$ of $(\tilde{P})$, where $\phi_1$ is the positive eigenfunction associated with the first eigenvalue of $(P)_{\lambda}$.
\end{lemma}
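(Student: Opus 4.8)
The plan is to test the weak form of $(\tilde P)$ with the eigenfunction $\phi_1$ and then exploit the spectral gap in $(G_1)$ to control the weighted quantity $\int_{\R^N}P(x)u^+\phi_1\,dx$, from which the claim follows. First I would take $\phi_1$ (normalized by $\int_{\R^N}P\phi_1^2\,dx=1$) as a test function in $(\tilde P)$. Using the eigenfunction identity $\int_{\R^N}\nabla u\nabla\phi_1\,dx=\lambda_1\int_{\R^N}P(x)u\phi_1\,dx$ together with $\int_{\R^N}P(x)f_1\phi_1\,dx=0$, exactly as in the computation of Lemma \ref{l2}, I obtain
$$\lambda_1\int_{\R^N}P(x)u\phi_1\,dx=\int_{\R^N}P(x)g(u)\phi_1\,dx+t.$$

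Next I would separate the roles of the positive and negative parts. Splitting $\R^N=\{u\ge0\}\cup\{u<0\}$ and using Lemma \ref{l11}, on $\{u<0\}$ one has $|u|=|u^-|\le K:=|w|_\infty$, so $g(u)$ is bounded there by continuity of $g$ on $[-K,0]$, while $u^+=0$ forces $g(u^+)=g(0)$. Comparing $\int P g(u^+)\phi_1$ with $\int P g(u)\phi_1$ on the two sets, their difference is an integral over $\{u<0\}$ of bounded integrands against $P\phi_1$; since $\int_{\R^N}P\phi_1\,dx<\infty$ (which follows from the decay (\ref{m1}) of $\phi_1$, the integrability $(P_1)$ of $P$, and continuity of $P$ near the origin), this difference is bounded independently of $u$. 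Likewise $\lambda_1\int_{\R^N}P u^-\phi_1\,dx$ is bounded, and $t$ is fixed once $f$ is fixed. Combining these facts with the identity above yields
$$\int_{\R^N}P(x)g(u^+)\phi_1\,dx=\lambda_1\int_{\R^N}P(x)u^+\phi_1\,dx+R,\qquad |R|\le C_0,$$
with $C_0$ independent of $u$.

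The crucial step is then to invoke (\ref{eq02}), namely $g(s)\ge\overline{\mu}s-\Theta$ with $\overline{\mu}>\lambda_1$. Applied at $s=u^+\ge0$ and integrated against $P\phi_1$, it gives $\int_{\R^N}P g(u^+)\phi_1\,dx\ge\overline{\mu}\int_{\R^N}P u^+\phi_1\,dx-\Theta\int_{\R^N}P\phi_1\,dx$. Substituting the previous display produces $(\overline{\mu}-\lambda_1)\int_{\R^N}P u^+\phi_1\,dx\le C_0+\Theta\int_{\R^N}P\phi_1\,dx$, and because $\overline{\mu}-\lambda_1>0$ this bounds $\int_{\R^N}P u^+\phi_1\,dx$ uniformly in $u$. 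Feeding this back into the display bounds $\int_{\R^N}P g(u^+)\phi_1\,dx$ from above, while the elementary lower bound $g(u^+)\ge\overline{\mu}u^+-\Theta\ge-\Theta$ (valid since $u^+\ge0$) gives $\int_{\R^N}P g(u^+)\phi_1\,dx\ge-\Theta\int_{\R^N}P\phi_1\,dx$, completing the two-sided estimate (\ref{ineq}).

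The main obstacle is that $u^+$ is not a priori bounded, so the positive part cannot be controlled pointwise and must instead be estimated in the weighted, integrated sense $\int_{\R^N}P u^+\phi_1\,dx$; the mechanism that makes this possible is precisely the strict gap $\overline{\mu}>\lambda_1$, which converts the eigenfunction identity into an inequality with a coefficient of the correct sign. The only other technical care needed is the finiteness of the weight integrals $\int_{\R^N}P\phi_1\,dx$ and $\int_{\R^N}P u^-\phi_1\,dx$, which is dispatched using (\ref{m1}) and $(P_1)$.
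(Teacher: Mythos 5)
Your argument is correct and rests on the same mechanism as the paper's proof: test $(\tilde{P})$ with $\phi_1$ and exploit the two linear lower bounds on $g$ whose slopes straddle $\lambda_1$, together with the uniform bound on $u^-$ from Lemma \ref{l11}. The only difference is organizational --- the paper applies both (\ref{eq01}) and (\ref{eq02}) to the full $u$ to get a two-sided bound on $\int_{\R^N}P(x)u\phi_1\,dx$ and only then passes to $u^+$, whereas you split off $u^-$ first and then need only (\ref{eq02}) because $\int_{\R^N}P(x)u^+\phi_1\,dx\ge 0$ supplies the other side for free --- an equally valid rearrangement.
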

\begin{proof}
	Taking $\phi_1$ as a test function in the problem $(P)$, we get
	\begin{equation*}
	\lambda_1\int_{\R^N}P(x)u\phi_1 dx=\int_{\R^N}P(x)g(u)\phi_1 dx+\int_{\R^N}P(x)f(x)\phi_1 dx.
	\end{equation*}
From (\ref{eq01}) and (\ref{eq02}), 
	\begin{equation}
	(\underline{\mu}-\lambda_1)\int_{\R^N}P(x)u\phi_1 dx\leq \Theta\int_{\R^N}P(x)\phi_1dx-\int_{\R^N}P(x)f(x)\phi_1dx\leq C_1
	\end{equation}
	and
	\begin{equation}
	(\overline{\mu}-\lambda_1)\int_{\R^N}P(x)u\phi_1 dx\leq \Theta\int_{\R^N}P(x)\phi_1dx-\int_{\R^N}P(x)f(x)\phi_1dx\leq C_1,
	\end{equation}
implying that $\left|\int_{\R^N}P(x)u\phi_1 dx\right|\leq C_2<+\infty$ for some $C_2>0$. Consequently, $\left|\int_{\R^N}P(x)g(u)\phi_1 dx\right|\leq C_3$. This together with the Lemma \ref{l11} ensures that $\left|\int_{\R^N}P(x)g(u^+)\phi_1 dx\right|\leq C$, for some $C>0$.	
\end{proof}

\begin{lemma}
	Assume $(G_1)$, $(P_1)$ and $(P_2)$. Then for a given $f \in C(\R^N)\cap L^{\infty}(\R^N)$ there is a constant $C>0$ such that
	\begin{equation}
	\int_{\R^N}|\nabla u^+|^2dx\leq C
	\end{equation}
	for all solutions $u$ of $(\tilde{P})$.
\end{lemma}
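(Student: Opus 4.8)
The plan is to test the equation against $u^{+}$ and to show that the whole estimate collapses to a uniform $L^{2^*}$ bound for $u^{+}$. Taking $\varphi=u^{+}\in D^{1,2}(\R^N)$ in the weak form of $(\tilde P)$ and using $\nabla u\cdot\nabla u^{+}=|\nabla u^{+}|^{2}$ together with $g(u)u^{+}=g(u^{+})u^{+}$ (since $u^{+}=u$ on $\{u>0\}$ and vanishes elsewhere), I obtain
$$\int_{\R^N}|\nabla u^{+}|^{2}\,dx=\int_{\R^N}P(x)g(u^{+})u^{+}\,dx+t\int_{\R^N}P(x)\phi\,u^{+}\,dx+\int_{\R^N}P(x)f_1u^{+}\,dx.$$
First I would observe that all three terms on the right are uniformly bounded as soon as $|u^{+}|_{2^*}$ is. Indeed, since $(P_1)$ forces $P\in L^{1}(\R^N)\cap L^{\infty}(\R^N)$ and hence $P\in L^{r}(\R^N)$ for every $r$, H\"older's inequality gives $|u^{+}|_{2,P}\le C|u^{+}|_{2^*}$, which bounds the two linear terms; while for the nonlinear term I would use Cauchy--Schwarz, $\int_{\R^N}Pg(u^{+})u^{+}\le|g(u^{+})|_{2,P}\,|u^{+}|_{2,P}$, and reduce matters to a bound on $\int_{\R^N}Pg(u^{+})^{2}\,dx$.

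This is exactly where the growth hypothesis (\ref{sigma}) enters, and where $(G_1)$ alone is not enough: (\ref{eq01})--(\ref{eq02}) only bound $g$ from below, so no control of $\int_{\R^N}Pg(u^{+})u^{+}$ is available without an upper growth restriction. The choice $\sigma=N/(N-2)=2^*/2$ is tailored so that $g(s)^{2}=o(s^{2^*})$; thus for every $\varepsilon>0$ there is $C_\varepsilon$ with $g(u^{+})^{2}\le\varepsilon(u^{+})^{2^*}+C_\varepsilon$, whence $\int_{\R^N}Pg(u^{+})^{2}\le\varepsilon|P|_{\infty}|u^{+}|_{2^*}^{2^*}+C_\varepsilon|P|_{1}$. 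The same exponent makes the direct estimate $\int_{\R^N}Pg(u^{+})u^{+}\le\varepsilon\int_{\R^N}P(u^{+})^{\sigma+1}+C_\varepsilon\int_{\R^N}Pu^{+}$ close, because $\sigma+1<2^*$ and $(\sigma+1)\,\frac{N}{N-1}=2^*$, so H\"older against $P\in L^{N}(\R^N)$ yields $\int_{\R^N}P(u^{+})^{\sigma+1}\le|P|_{N}\,|u^{+}|_{2^*}^{\sigma+1}$. In every case the conclusion $\int_{\R^N}|\nabla u^{+}|^{2}\le C$ follows once $|u^{+}|_{2^*}\le C$ uniformly in the solutions.

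Hence the real task, and the step I expect to be the main obstacle, is the uniform $L^{2^*}$ bound for $u^{+}$; note that it cannot be extracted from the identity above merely by Sobolev, since that would be circular. I would obtain it from the Newtonian representation used in Lemma \ref{l3}, writing $0\le u^{+}(x)\le C_N\int_{\R^N}\frac{P(y)g(u^{+}(y))}{|x-y|^{N-2}}\,dy+\frac{C}{|x|^{N-2}}$, where the contribution of the region $\{u\le0\}$ is absorbed into the last term through the uniform bound on $u^{-}$ from Lemma \ref{l11} and hypothesis $(P_2)$. The task is then to estimate the Riesz potential of $Pg(u^{+})$ in $L^{2^*}$, feeding in the weighted bound $\int_{\R^N}Pg(u^{+})\phi_1\le C$ from Lemma \ref{lem3.3} together with the precise decay $\phi_1(x)\sim|x|^{-(N-2)}$ from (\ref{m1}) and the subcritical growth (\ref{sigma}). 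Balancing the singular kernel $|x-y|^{-(N-2)}$ against the weight $P$ by means of $(P_2)$ is the delicate point: unlike the bounded-domain theory there is no boundary and no positive outer normal derivative to exploit, so the integrability must be won entirely from the interplay of $(P_2)$, the decay of $\phi_1$, and the critical-type exponent $\sigma=2^*/2$. Once this $L^{2^*}$ bound is secured, the computation of the first paragraph finishes the proof.
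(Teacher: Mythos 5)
You set up the correct starting identity (testing with $u^{+}$) and you correctly diagnose that the naive estimate fails: $g(s)\le\varepsilon s^{\sigma}+C_\varepsilon$ together with Sobolev produces $|\nabla u^{+}|_2^2\le \varepsilon C|\nabla u^{+}|_2^{\sigma+1}+\dots$ with $\sigma+1>2$, which yields only a small/large dichotomy, not a uniform bound. But at that point your proof stops: the entire difficulty is deferred to an unproven ``uniform $L^{2^*}$ bound for $u^{+}$'' to be obtained from the Newtonian representation, and you yourself flag the balancing of the kernel, the weight $P$, and the decay of $\phi_1$ as ``the delicate point'' without resolving it. That step is not a technicality one can wave at. The only a priori information available on the nonlinearity is the weighted $L^{1}$ bound $\left|\int_{\R^N}P(x)g(u^{+})\phi_1\,dx\right|\le C$ of Lemma \ref{lem3.3}, and the Riesz potential of an $L^{1}$-type quantity lands (at best, by weak-type estimates) in $L^{N/(N-2)}$, far below $L^{2^*}=L^{2N/(N-2)}$; trying instead to bound $Pg(u^{+})$ in $L^{(2^*)'}$ via (\ref{sigma}) reproduces the same superlinear exponent $\sigma>1$ in $|u^{+}}|_{2^*}$ and again fails to close. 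So as written the proposal has a genuine gap at its central step.

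The paper closes the loop without ever needing a prior $L^{2^*}$ bound, by the Brezis--Turner/de Figueiredo interpolation device: write
\begin{equation*}
\int_{\R^N}P(x)g(u^{+})u^{+}\,dx=\int_{\R^N}\bigl(P(x)g(u^{+})\phi_1\bigr)^{\beta}\bigl(P(x)g(u^{+})\bigr)^{1-\beta}\phi_1^{-\beta}u^{+}\,dx,\qquad \beta=\frac{\sigma-1}{\sigma},
\end{equation*}
and apply H\"older with exponents $1/\beta$ and $1/(1-\beta)$. The first factor is controlled by Lemma \ref{lem3.3} raised to the power $\beta$; the second becomes $\bigl(\int_{\R^N}P(x)g(u^{+})(u^{+})^{\sigma}\phi_1^{-\gamma}\,dx\bigr)^{1/\sigma}$ with $\gamma=2/(N-2)$, and here the decay (\ref{m1}) converts $\phi_1^{-\gamma}$ into $|x|^{2}$ so that $(P_1)$ applies; inserting $g(s)\le\varepsilon s^{\sigma}+C_\varepsilon$ then produces, after taking the $1/\sigma$ power (note $2\sigma=2^{*}$ and $2^{*}/\sigma=2$), exactly $C\varepsilon|\nabla u^{+}|_2^{2}+C|\nabla u^{+}|_2$. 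The exponent has been knocked down from $\sigma+1$ to $2$ precisely because a $\beta$-fraction of the integrand was absorbed into the bounded weighted integral, and the estimate closes by absorbing $C\varepsilon|\nabla u^{+}|_2^{2}$ into the left-hand side. This multiplicative use of Lemma \ref{lem3.3} is the idea missing from your argument.
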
	
	
\begin{proof}
	Given a solution $u$ of $(\tilde{P})$, we know that $u\in D^{1,2}(\R^N)$ and $u^{+}\in D^{1,2}(\R^N)$. Taking $u^{+}$ as a test function in $(\tilde{P})$, we see that 
	\begin{equation}\label{9}
	\int_{\R^N}|\nabla u^{+}|^2dx=\int_{\R^N}P(x)g(u^{+})u^{+}dx+\int_{\R^N}P(x)f(x)u^{+}dx.
	\end{equation}
	The last term in (\ref{9}) gives no problem, since $f\in L^{\infty}(\R^N)$ leads to 
	\begin{equation}
	\left| \int_{\R^N}P(x)f(x)u^{+}dx \right|\leq C|u^+|_{2,P}\leq C|\nabla u^{+}|_{2}.
	\end{equation}
	In order to estimate the first term in the right side of (\ref{9}), we rewrite it of the form
	\begin{equation}
	\int_{\R^N}P(x)g(u^{+})u^{+}dx=\int_{\R^N}\left( P(x)g(u^+)\phi_1\right)^{\beta}\left( P(x)g(u^+)\right)^{1-\beta}\phi_1^{-\beta}u^+dx,
	\end{equation}
	where $\beta=\frac{\sigma-1}{\sigma} \in (0,1)$, with $\sigma$ given in (\ref{sigma}). Thus, using the Hölder inequality for $p=1/\beta$ and $q=1/(1-\beta)$ where $1/p+1/q=1$, we find
	\begin{equation}\label{12}
	\int_{\R^N}P(x)g(u^{+})u^{+}dx\leq \left( \int_{\R^N}P(x)g(u^+)\phi_1dx\right)^{\beta}\left( \int_{\R^N}\frac{P(x)g(u^+)(u^+)^{\frac{1}{1-\beta}}}{\phi_1^{\frac{\beta}{1-\beta}}}dx\right)^{1-\beta}.
	\end{equation}
	On the other hand, we know that the first term in the right side of (\ref{12}) is bounded by Lemma \ref{lem3.3}, thus
	\begin{equation}
	\int_{\R^N}P(x)g(u^{+})u^{+}dx\leq C\left( \int_{\R^N}\frac{P(x)g(u^+)(u^+)^{\frac{1}{1-\beta}}}{\phi_1^{\frac{\beta}{1-\beta}}}dx\right)^{1-\beta}=C\left( \int_{\R^N}\frac{P(x)g(u^+)(u^+)^{\sigma}}{\phi_1^{\gamma}}dx\right)^{1/\sigma},
	\end{equation}
	where $\gamma=\frac{\beta}{1-\beta}=\frac{2}{N-2}$.  Now, as
	\begin{equation*}
	\left( \int_{\R^N}\frac{P(x)g(u^+)(u^+)^{\sigma}}{\phi_1^{\gamma}}dx\right)^{1/\sigma}\leq C\left( \int_{\overline{B}^c_R}\frac{P(x)g(u^+)(u^+)^{\sigma}}{\phi_1^{\gamma}}dx\right)^{1/\sigma}+C\left( \int_{\overline{B}_R}\frac{P(x)g(u^+)(u^+)^{\sigma}}{\phi_1^{\gamma}}dx\right)^{1/\sigma}
	\end{equation*}	
the condition (\ref{m1}) gives 
	\begin{equation}
		\left( \int_{\R^N}\frac{P(x)g(u^+)(u^+)^{\sigma}}{\phi_1^{\gamma}}dx\right)^{1/\sigma} \leq C\left( \int_{\overline{B}^c_R}P(x)|x|^2g(u^+)(u^+)^{\sigma}dx\right)^{1/\sigma}+C\left( \int_{\overline{B}_R}P(x)g(u^+)(u^+)^{\sigma}dx\right)^{1/\sigma}.
	\end{equation}
Using $(P_1)$ and Hölder inequality, we infer that
	\begin{eqnarray*}
	\left( \int_{\overline{B}^c_R}P(x)|x|^2g(u^+)(u^+)^{\sigma}dx\right)^{1/\sigma}&\leq&\left( \int_{\R^N}P(x)|x|^2g(u^+)(u^+)^{\sigma}dx\right)^{1/\sigma}\\\nonumber
	&\leq&C\varepsilon \left( \int_{\R^N}P(x)|x|^2(u^+)^{2\sigma}dx\right)^{1/\sigma}+C\left( \int_{\R^N}P(x)|x|^2(u^+)^{\sigma}dx\right)^{1/\sigma}\\\nonumber
	&\leq&C\varepsilon \left( \int_{\R^N}(u^+)^{2^*}dx\right)^{2/2^*}+C\left( \int_{\R^N}P(x)|x|^2(u^+)^{\sigma}dx\right)^{1/\sigma}\\\nonumber
	&\leq& C\varepsilon|\nabla u^+|_2^2+C\left( \int_{\R^N}P(x)|x|^2(u^+)^{\sigma}dx\right)^{1/\sigma}\\\nonumber
	&\leq& C\varepsilon|\nabla u^+|_2^2+C\left( \int_{\R^N}(u^+)^{2\sigma}dx\right)^{1/2\sigma}\\\nonumber
	&\leq& C\varepsilon|\nabla u^+|_2^2+C|u^+|_{2^*}
	\end{eqnarray*}
	implying that
	\begin{equation}
	\left( \int_{\overline{B}^c_R}P(x)|x|^2g(u^+)(u^+)^{\sigma}dx\right)^{1/\sigma}\leq C\varepsilon|\nabla u^+|_2^2+C|\nabla u^+|_2.
	\end{equation}
Moreover,
	\begin{eqnarray*}
	\left( \int_{\overline{B}_R}P(x)g(u^+)(u^+)^{\sigma}dx\right)^{1/\sigma}&\leq& \left( \int_{\R^N}P(x)g(u^+)(u^+)^{\sigma}dx\right)^{1/\sigma}\\\nonumber
	&\leq& \left( \int_{\R^N}P(x)[\varepsilon(u^+)^{\sigma}+C](u^+)^{\sigma}dx\right)^{1/\sigma}\\\nonumber
	&\leq& C\varepsilon\left( \int_{\R^N}P(x)(u^+)^{2\sigma}dx\right)^{1/\sigma}+C\left( \int_{\R^N}P(x)(u^+)^{\sigma}dx\right)^{1/\sigma}\\\nonumber
	&\leq& C\varepsilon|u^+|^2_{2^*}+C|u^+|_{2^*}\\\nonumber
	&\leq& C\varepsilon|\nabla u^+|^2_{2}+C|\nabla u^+|_{2}\nonumber
	\end{eqnarray*}
	that is,
	\begin{equation}
	\left( \int_{\overline{B}_R}P(x)g(u^+)(u^+)^{\sigma}dx\right)^{1/\sigma}\leq C\varepsilon|\nabla u^+|^2_{2}+C|\nabla u^+|_{2}.
	\end{equation}
	Therefore, from $(20)-(27)$, 
	\begin{equation*}
	|\nabla u^+|_2^2\leq C\varepsilon|\nabla u^+|_2^2+C|\nabla u^+|_2
	\end{equation*}
or equivalently
	\begin{equation*}
	(1-C\varepsilon)|\nabla u^+|_2^2\leq C|\nabla u^+|_2.
	\end{equation*}
This finishes the proof.	
\end{proof}
	
\begin{lemma} \label{3.4} Assume $(G_1)$, $(P_1)$ and $(P_2)$. Then for each $f \in C(\R^N)\cap L^{\infty}(\R^N)$ there is a constant $C>0$ that  depends on the norm of $f$ in $L^{\infty}(\R^N)$ such that $|u|_{\infty}\leq C$, for all solutions $u$ of $(P)$.
\end{lemma}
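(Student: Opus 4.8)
The plan is to reduce the whole statement to the quantitative local regularity estimate (\ref{eq03}) of Lemma \ref{reg}, which already carries exactly the uniformity we need once the right-hand side is put into a genuinely subcritical form.

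First I would collect the two a priori bounds proved so far. By Lemma \ref{l11} the negative part satisfies $|u^-|_\infty\le|w|_\infty$ and $|u^-|_{2^*}\le|w|_{2^*}$, while the previous lemma gives $\int_{\R^N}|\nabla u^+|^2\,dx\le C$, so by the Sobolev embedding $D^{1,2}(\R^N)\hookrightarrow L^{2^*}(\R^N)$ we also have $|u^+|_{2^*}\le C$. Adding these, $|u|_{2^*}\le C$ with a constant independent of the particular solution $u$ (and depending on $|f|_\infty$ only through $w$). It therefore suffices to convert this uniform $L^{2^*}$ bound into a uniform $L^\infty$ bound.

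Next I would check that the nonlinearity $h(x,t)=g(t)+f(x)$ meets, uniformly over all solutions, the hypothesis of Lemma \ref{reg}. The growth condition (\ref{sigma}) gives, for each $\varepsilon>0$, a constant $C_\varepsilon$ with $0\le g(t)\le\varepsilon t^{\sigma}+C_\varepsilon$ for $t\ge0$, and the exponent is admissible since $\sigma=\frac{N}{N-2}\in(1,2^*-1)$. The only gap is that (\ref{sigma}) says nothing as $t\to-\infty$; however, by Proposition \ref{l4} (equivalently Lemma \ref{l11}) every solution satisfies $u\ge w\ge-|w|_\infty$, so I may replace $g$ by a function $\tilde g$ that agrees with $g$ on $[-|w|_\infty,\infty)$ and is extended in a bounded, $C^1$ fashion on $(-\infty,-|w|_\infty)$. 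This modification changes no solution of $(\tilde P)$, and $\tilde h(x,t)=\tilde g(t)+f(x)$ now obeys the global subcritical bound $|\tilde h(x,t)|\le C(1+|t|^{\sigma})$ with $C$ depending only on $|f|_\infty$ and $|w|_\infty$.

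Finally, since the function $\Upsilon$ produced by Lemma \ref{reg} depends only on the structural constants in this growth bound and not on $u$, the estimate (\ref{eq03}) holds with one and the same $\Upsilon$ for every solution. As $|u|_{L^{2^*}(B_2(z))}\le|u|_{2^*}\le C$ for all $z\in\R^N$ and $\Upsilon$ is continuous, hence bounded on $[0,C]$, I would conclude
\[
|u|_\infty=\sup_{z\in\R^N}\|u\|_{C(\overline{B_1(z)})}\le\max_{[0,C]}\Upsilon=:C,
\]
with $C=C(|f|_\infty)$, as claimed. The main obstacle is precisely the step of the third paragraph: verifying that the growth hypothesis of Lemma \ref{reg} holds globally and uniformly. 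On the positive half-line this is exactly what (\ref{sigma}) buys (together with $\sigma<2^*-1$, so the critical exponent is never reached), but on the negative half-line $g$ is uncontrolled, and the argument closes only because the a priori bound $|u^-|_\infty\le|w|_\infty$ lets us freeze $g$ there without affecting any solution. If one prefers to avoid altering $g$, an alternative is a Brezis--Kato/Moser iteration exploiting that $\varepsilon P(u^+)^{\sigma-1}$ lies in $L^{N/2}(\R^N)$ with norm controlled by $|u^+|_{2^*}$ (here one uses $2\sigma=2^*$ and $(\sigma-1)\tfrac{N}{2}=\sigma$ together with $P\in L^\infty$), but the route above is shorter.
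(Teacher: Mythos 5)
Your argument is correct and follows exactly the route the paper intends: the paper's own proof of this lemma is literally the single line ``See proof of Lemma \ref{reg}'', i.e.\ it reduces the uniform $L^\infty$ bound to the quantitative local estimate (\ref{eq03}) combined with the uniform $L^{2^*}$ bound coming from Lemma \ref{l11} and the preceding gradient estimate, which is precisely your chain of reasoning. You in fact supply two details the paper leaves implicit --- that (\ref{sigma}) (available in the context of Theorem \ref{T2}) gives the subcritical growth needed on the positive half-line, and that $g$ must be frozen below $-|w|_\infty$ (harmless, since every solution satisfies $u\ge w$) to make the growth hypothesis of Lemma \ref{reg} hold globally --- so your write-up is a faithful, and more complete, version of the paper's proof.
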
	

\begin{proof}
See proof of Lemma \ref{reg}.
\end{proof}

\section{A solution operator}

First of all, as the embedding  $D^{1,2}(\R^N)\hookrightarrow L^2_P(\R^N)$ is compact  (see \cite{Edelson-Rumbos1}), we can guarantee that for each Let $v\in L^2_P(\R^N)$ the linear functional $\Psi:D^{1,2}(\R^N)\rightarrow\R$ given by
\begin{equation*}
\Psi(\varphi):=\int_{\R^N}P(x)[g(v)+t\phi_1+f_1]\varphi dx
\end{equation*}
is continuous. Indeed, note that
\begin{eqnarray*}
|\Psi(\varphi)|&\leq& \int_{\R^N}P(x)[|g(v)|+|t|\phi_1+|f_1|]\cdot |\varphi| dx\\
&=&\int_{\R^N} P(x)|g(v)|\cdot |\varphi| dx+|t|\int_{\R^N} P(x)\phi_1\cdot |\varphi| dx+ \int_{\R^N} P(x)f_1|\varphi| dx.
\end{eqnarray*}
By Hölder inequality, 
\begin{equation*}
\int_{\R^N} P(x)\phi_1\cdot |\varphi| dx=\int_{\R^N} P^{1/2}(x)\phi_1\cdot P^{1/2}(x)|\varphi| dx\leq |\phi_1|_{2,P}|\varphi|_{2,P}\leq C|\varphi|_{1,2}
\end{equation*}
and
\begin{equation*}
\int_{\R^N} P(x)f_1|\varphi| dx\leq |f_1|_{\infty}\int_{\R^N}P^{1/2}(x)P^{1/2}(x)\varphi dx\leq C|\varphi|_{2,P}\leq C|\varphi|_{1,2}.
\end{equation*}
On the other hand, as $P^{1/2}g(v)\in L^2(\R^N)$ and $P^{1/2}|\varphi|\in L^2(\R^N)$, we also have
\begin{equation*}
\int_{\R^N} P(x)|g(v)|\cdot |\varphi| dx\leq C|\varphi|_{2,P}\leq C|\varphi|_{1,2}.
\end{equation*}
Using the estimates above we can infer that
\begin{equation*}
|\Psi(\varphi)|\leq C|\varphi|_{1,2},\quad\forall \varphi\in D^{1,2}(\R^N), 
\end{equation*}
for some positive constant $C>0$. This shows that  $\Psi$ is continuous. 

Using Riesz's Theorem, there exists a unique $u\in D^{1,2}(\R^N)$ such that
\begin{equation*}
\int_{\R^N}\nabla u\nabla\varphi dx= \int_{\R^N}P(x)[g(v)+t\phi_1+f_1]\varphi dx,\quad \forall \varphi\in D^{1,2}(\R^N).
\end{equation*}

Therefore, it is well defined the linear solution operator $K_t:L^2_P(\R^N)\rightarrow L^2_P(\R^N)$, which is a compact operator, given by $K_t(v):=u$ where $u$ is the unique solution of the problem
\begin{equation*}
-\Delta u=P(x)\Big( g(v)+t\phi_1(x)+f_1(x)\Big),\mbox{ in } \mathbb{R}^N,\quad u\in D^{1,2}(\R^N).
\end{equation*}
Observe that the solutions of the Ambrosetti-Prodi type problem below 
\begin{equation*}
-\Delta u=P(x)\Big( g(u)+t\phi_1(x)+f_1(x)\Big),\mbox{ in } \mathbb{R}^N,\quad u\in D^{1,2}(\R^N)
\end{equation*}
are fixed points of the operator $K_t$.

Now, as in \cite{Alves-Lima-Souto}, we will consider the space
$$
E_0=\left\{v \in C(\R^N,\R);\ \sup_{x \in \mathbb{R}^N}\left(|x|^{N-2}|v(x)|\right)<+\infty\right\}
$$
equipped with the norm
$$
\|v\|=\sup_{x \in \mathbb{R^N}}|v(x)|+\sup_{x \in \mathbb{R}^N}\left(|x|^{N-2}|v(x)|\right)
$$
that is, 
$$
\|v\|=|v|_{\infty}+|(|\cdot|^{N-2})v|_{\infty}.
$$
A simple computation gives that $(E_0, \|\cdot\|)$ is a Banach space and the embedding $E_0 \hookrightarrow L^{2}_P(\R^N)$ is continuous.
From this, given for each $v \in E_0$ the problem
$$
\left\{
\begin{array}{lcl}
-\Delta u=P(x)\Big( g(v)+t\phi_1(x)+f_1(x)\Big),\mbox{ in } \mathbb{R}^N\\
u \in D^{1,2}(\R^N).
\end{array}
\right.
\eqno{(WLP)}
$$
has a unique weak solution $u=:K_t(v)\in D^{1,2}(\R^N) \hookrightarrow L^2_P(\R^N)$. The lemma below shows that $u\in E_0$. 
\begin{lemma} \label{continuidade} $K_t(E_0) \subset E_0$.
\end{lemma}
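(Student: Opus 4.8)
The plan is to follow exactly the scheme already used in the proof of Lemma \ref{l3}: first upgrade the weak solution $u=K_t(v)$ to a continuous, bounded function via the regularity lemma, and then read off the precise decay $|u(x)|\le C|x|^{-(N-2)}$ from the Newtonian representation of $u$ together with hypothesis $(P_2)$. The only new ingredient compared with Lemma \ref{l3} is that here the right-hand side depends on the fixed datum $v\in E_0$ rather than on the unknown, but since $v$ is bounded this changes nothing essential.

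First I would record that, since $v\in E_0$, the function $v$ is continuous with $|v|_{\infty}\le\|v\|<\infty$. Because $g\in C^1(\R)$, the composition $g(v)$ is then continuous and bounded on $\R^N$. The eigenfunction $\phi_1$ is bounded as well: the upper estimate in (\ref{m1}) gives $\phi_1(x)\le C_2|x|^{-(N-2)}\le C_2$ for $|x|\ge1$, while continuity controls $\phi_1$ on $\overline{B_1(0)}$; and $f_1\in L^{\infty}(\R^N)$. Hence the right-hand side
$$
\bar h(x):=g(v(x))+t\phi_1(x)+f_1(x)
$$
is a bounded continuous function, say $|\bar h|_{\infty}=:M<\infty$. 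Writing the equation in $(WLP)$ as $-\Delta u=P(x)h(x,u)$ with $h(x,s):=\bar h(x)$ (constant in $s$), we have $|h(x,s)|=|\bar h(x)|\le M\le M(1+|s|^p)$ for any $p\in(1,2^*-1)$, so Lemma \ref{reg} applies and yields $u\in C(\R^N)\cap L^{\infty}(\R^N)$. This already provides the first half of the $E_0$-norm, namely $|u|_{\infty}<\infty$.

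Next I would invoke the integral representation of $u$, exactly as in Lemma \ref{l3} and \cite{Alves-Lima-Souto}: since $P\bar h\in L^1(\R^N)\cap L^{\infty}(\R^N)$, the unique $D^{1,2}(\R^N)$ weak solution of $(WLP)$ is given by
$$
u(x)=C_N\int_{\R^N}\frac{P(y)\bar h(y)}{|x-y|^{N-2}}\,dy.
$$
Taking absolute values and using $|\bar h|\le M$,
$$
|u(x)|\le C_N M\int_{\R^N}\frac{P(y)}{|x-y|^{N-2}}\,dy,
$$
and then hypothesis $(P_2)$ gives $|u(x)|\le CM|x|^{-(N-2)}$ for all $x\in\R^N\setminus\{0\}$. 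Therefore $\sup_{x\in\R^N}\bigl(|x|^{N-2}|u(x)|\bigr)<\infty$, which combined with $|u|_{\infty}<\infty$ shows $u\in E_0$, as required.

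I expect the only genuinely delicate point to be the passage to the representation formula, i.e.\ identifying the abstract Riesz-theoretic weak solution $u=K_t(v)$ with the Newtonian potential of $P\bar h$. This rests on the integrability $P\in L^1(\R^N)$, which follows from $(P_1)$ by splitting $\R^N$ into $B_1(0)$, where $P$ is bounded by continuity, and its complement, where $\int_{|x|\ge1}P\le\int_{|x|\ge1}|x|^2P<\infty$, together with the uniqueness argument cited from \cite{Alves-Lima-Souto}. Everything else is a direct transcription of the estimates in Lemma \ref{l3}, the new input being merely that $\bar h$ is bounded because $v$ ranges in $E_0$.
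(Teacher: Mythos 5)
Your argument is correct and reaches the same conclusion, but it handles the continuity of $u=K_t(v)$ by a different route than the paper. You obtain $u\in C(\R^N)\cap L^{\infty}(\R^N)$ by invoking Lemma \ref{reg} (the Rabinowitz-type regularity lemma), and then derive the decay $|u(x)|\le C|x|^{-(N-2)}$ from the Newtonian representation together with $(P_2)$ --- exactly mirroring Lemma \ref{l3}. The paper instead devotes almost all of its proof to establishing continuity \emph{directly from the representation formula}: for fixed $x_0$ it bounds $\int_{\R^N}\bigl|\,|y-z|^{-(N-2)}-|x_0-z|^{-(N-2)}\bigr|P(z)\,dz$ by splitting into $B_{\delta_1}(x_0)$ (controlled by $|P|_{\infty}\delta_1^2$) and its complement (controlled by dominated convergence with majorant $CP$), and only afterwards reads off the decay from $(P_2)$. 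Your route is shorter and reuses existing machinery; the paper's is self-contained and sidesteps a small mismatch in your appeal to Lemma \ref{reg}, namely that the lemma is stated for $h$ of class $C^1$ while your $\bar h(x)=g(v(x))+t\phi_1(x)+f_1(x)$ is only continuous in $x$ (since $v\in E_0$ is merely continuous). This is not a genuine gap --- the proof of Lemma \ref{reg} uses only the growth bound, and the paper itself applies that lemma with a right-hand side that is only continuous in $x$ in Lemma \ref{l3} --- but it is the one point where the direct kernel estimate of the paper is cleaner. Your identification of the abstract weak solution with the Newtonian potential of $P\bar h$, justified via $P\in L^1\cap L^{\infty}$ and the citation of the reference used by the paper, matches what the paper does implicitly.
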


\begin{proof}
	Let $v \in E_0$ and $u=K_t(v) \in D^{1,2}(\R^N)$. Then,
	\begin{equation}\label{repre}
	u(x)=C\int_{\mathbb{R}^N}\frac{P(y)[g(v)+mv+t\phi_1(y)+f_1(y)]}{|x-y|^{N-2}}dy, \quad \forall x \in \R^N.
	\end{equation}
	First of all we are going to prove that $u$ is a continuous function in $\R^N$. Fixed $x_0\in \R^N$, for all $y \in \R^N$ it holds
	$$
	|u(y)-u(x_0)| \leq C\int_{\R^N}\left|\frac{P(z)}{|y-z|^{N-2}}-\frac{P(z)}{|x_0-z|^{N-2}}\right|dz .
	$$
	Given $\delta_1>0$ and $y \in B_{\delta_1/2}(x_0)$, we have $ B_{\delta_1}(x_0) \subset B_{2\delta_1}(y)$ and 
	$$\begin{array}{ll}
\ddd\int_{B_{\delta_1}(x_0)}\left|\frac{P(z)}{|y-z|^{N-2}}-\frac{P(z)}{|x_0-z|^{N-2}}\right|dz &\leq C_1 \ddd\int_{B_{\delta_1}(x_0)}\frac{P(z)}{|x_0-z|^{N-2}}dz \\ 
	& +\ddd\int_{B_{2\delta_1}(y)}\frac{P(z)}{|y-z|^{N-2}}dz\\
	&\leq C_1|P|_{\infty}\delta_1^{2}+C_2|P|_{\infty}(2\delta_1)^{2}.
	\end{array}
	$$
	From this, given $\varepsilon>0$, let us fix $\delta_1>0$ verifying 
	$$
	C_1|P|_{\infty}\delta_1^{2}+C_2|P|_{\infty}(2\delta_1)^{2}<\epsilon/2,
	$$
	consequently
	\begin{equation} \label{E1}
	\left|\int_{B_{\delta_1}(x_0)}\left(\frac{P(z)}{|y-z|^{N-2}}-\frac{P(z)}{|x_0-z|^{N-2}}\right)dz \right|<\frac{\epsilon}{2}.
	\end{equation}
	On the other hand, for  $y \in B_{\delta_1/2}(x_0)$ and $z \in \mathbb{R}^N\setminus B_{\delta_1}(x_0)$, we derive
	$$
	|y-z|\ge |z-x_0|-|x_0-y|\ge \frac{\delta_1}{2},
	$$
	then
	$$\frac{1}{|y-z|^{N-2}}\le \left(\frac{2}{\delta_1}\right)^{N-2}$$
	and
	$$\frac{1}{|x_0-z|^{N-2}}\le \left(\frac{1}{\delta_1}\right)^{N-2}.$$
	Hence,
	$$
	\left|\frac{P(z)}{|y-z|^{N-2}}-\frac{P(z)}{|x_0-z|^{N-2}}\right|\le C P(z),\, \forall z \in \mathbb{R}^N\setminus B_{\delta_1}(x_0)\ \mbox{and}\ \forall y \in B_{\delta_1/2}(x_0).
	$$
	From Lebesgue's Theorem 
	$$
	\lim_{y\to x_0}\int_{\mathbb{R}^N\setminus B_{\delta_1}(x_0)}\left|\frac{P(z)}{|y-z|^{N-2}}-\frac{P(z)}{|x_0-z|^{N-2}}\right|dz=0.
	$$
	The last limit together with (\ref{E1}) implies that there exists $\delta \in (0, \delta_1/2)$ such that 
	$$
	\int_{\mathbb{R}^N}\left|\frac{P(z)}{|y-z|^{N-2}}-\frac{P(z)}{|x_0-z|^{N-2}}\right|dz< \frac{\epsilon}{2} \quad \mbox{for} \quad |y-x_0| < \delta,
	$$
	showing that $u$ is continuous at $x_0$. As $x_0$ is arbitrary, $u$ is continuous in $\R^N$. In order to conclude that $u \in E_0$, it is enough to recall  that
	$$
	|u(x)|\le C\int_{\mathbb{R}^N}\frac{P(y)}{|x-y|^{N-2}}dy,
	$$
	because this inequality combined withe $(P_2)$ leads to  
	$$
	|u(x)| \leq \frac{C}{|x|^{N-2}}, \quad \forall x \in \R^N \setminus \{0\},
	$$
	for some $C>0$, proving the desired result.
\end{proof}

The last lemma permits to consider the operator $T_t:=K_t|_{E_0}:E_0 \to E_0$. Our next step is to show that $T_t$ is a compact operator, which is a crucial property to apply our approach.   

\begin{lemma}
	The solution operator $T_t:E_0 \to E_0$ is a compact linear operator. 
\end{lemma}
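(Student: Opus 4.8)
The plan is to show that $T_t$ sends bounded subsets of $E_0$ into relatively compact subsets, so that compactness follows together with the continuity needed to run Leray--Schauder degree; the qualifier ``linear'' is inherited from the terminology already fixed for $K_t$ and is not the substantive point. Everything will be read off the integral representation furnished by Lemma \ref{continuidade},
$$
(T_t v)(x)=C_N\int_{\R^N}\frac{P(y)\big(g(v(y))+t\phi_1(y)+f_1(y)\big)}{|x-y|^{N-2}}\,dy,
$$
which reduces the problem to estimates on a Newtonian-type potential with a controlled density.

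First I would fix a bounded sequence $(v_n)\subset E_0$, say $\|v_n\|\le M$, and record the uniform control on the sources. Since $g\in C^1$ and $|v_n|_\infty\le M$, the numbers $|g(v_n)|_\infty$ are bounded by a constant depending only on $M$; together with the pointwise bound $\phi_1(x)\le C_2|x|^{-(N-2)}$ from (\ref{m1}) and with $f_1\in L^\infty$, the densities $\rho_n:=P\big(g(v_n)+t\phi_1+f_1\big)$ are dominated by a single function $P\psi$ with $\psi$ bounded and $P\psi\in L^1(\R^N)$ (here $P\in L^1$ by $(P_1)$, and $\int P\phi_1<\infty$ since $P|\cdot|^2\in L^\infty$ and $|\cdot|^{-N}$ is integrable at infinity). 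Applying $(P_2)$ to the potential exactly as in Lemmas \ref{l3} and \ref{continuidade} then yields the uniform bounds $|T_tv_n|_\infty\le C$ and $|(T_tv_n)(x)|\le C|x|^{-(N-2)}$, so that $\sup_n\|T_tv_n\|<\infty$ and the family decays uniformly at infinity.

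Next I would establish equicontinuity. The modulus-of-continuity computation carried out in the proof of Lemma \ref{continuidade} uses the density only through its $L^1$-domination, so, applied to the common dominating function $P\psi$, it produces a modulus uniform in $n$; this gives equicontinuity of $(T_tv_n)$ on every ball and, away from the origin, of the weighted functions $w_n(x):=|x|^{N-2}(T_tv_n)(x)$. An Arzel\`a--Ascoli argument over an exhaustion of $\R^N$ by balls, followed by a diagonal extraction, then yields a subsequence converging locally uniformly, and the uniform tail bound $|(T_tv_n)(x)|\le C|x|^{-(N-2)}$ upgrades this to convergence in $|\cdot|_\infty$, which settles the first half of the $E_0$-norm.

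The main obstacle is the weighted half of the norm, since $|x|^{N-2}|(T_tv_n)(x)|\le C$ only gives boundedness of $w_n$, not smallness at infinity. I would show that the $w_n$ are, uniformly in $n$, close at infinity to the constants $\ell_n:=C_N\int_{\R^N}\rho_n\,dy$, by writing
$$
w_n(x)-\ell_n=C_N\int_{\R^N}\Big(\frac{|x|^{N-2}}{|x-y|^{N-2}}-1\Big)\rho_n(y)\,dy
$$
and splitting the integral at a fixed radius $R_0$ chosen so that $\int_{|y|>R_0}P\psi<\varepsilon$: on $\{|y|\le R_0\}$ the factor is bounded and, for $|x|$ large, tends to $0$ uniformly in $y$, while on $\{|y|>R_0\}$ the contribution is at most $C\int_{|y|>R_0}P\psi<C\varepsilon$, uniformly in $n$. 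Hence $\sup_n\sup_{|x|\ge R}|w_n(x)-\ell_n|\to 0$ as $R\to\infty$, and this equismallness of the tails, combined with the local uniform convergence of $(w_{n_k})$ and with $\ell_{n_k}\to\ell$ (from $\int\rho_{n_k}\to\int\rho$ by dominated convergence), makes $(w_{n_k})$ Cauchy in the sup norm, so $(T_tv_{n_k})$ converges in $\|\cdot\|$. Finally, continuity of $T_t$ follows from the same representation: if $v_n\to v$ in $E_0$ then $g(v_n)\to g(v)$ pointwise and boundedly, and dominated convergence in the potential gives $T_tv_n\to T_tv$. Combining relative compactness of images of bounded sets with continuity yields that $T_t$ is compact, as claimed.
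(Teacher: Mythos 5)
The paper itself gives no argument for this lemma---it defers entirely to \cite[Lemma 3.3]{Alves-delima-Nobrega}---so a self-contained proof is a genuinely different (and more informative) route, and your overall architecture is the right one: uniform $|\cdot|_\infty$ and $|x|^{-(N-2)}$ bounds on the images via $(P_2)$, equicontinuity recycled from the computation in Lemma \ref{continuidade} (which indeed only uses the $L^1$-domination of the density), Arzel\`a--Ascoli with a diagonal extraction, and then a separate tail analysis for the weighted half of the $\|\cdot\|$-norm, which is the genuinely delicate part. The sup-norm half of your argument is sound, as is the continuity of $T_t$.

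The gap is in the tail estimate for $w_n(x)=|x|^{N-2}(T_tv_n)(x)$. After splitting $\int_{\R^N}\bigl(\tfrac{|x|^{N-2}}{|x-y|^{N-2}}-1\bigr)\rho_n(y)\,dy$ at $|y|=R_0$, you claim the outer contribution is at most $C\int_{|y|>R_0}P\psi\,dy$, which implicitly bounds the kernel $\bigl|\tfrac{|x|^{N-2}}{|x-y|^{N-2}}-1\bigr|$ by a constant on $\{|y|>R_0\}$. This fails near the diagonal: for $y=x+e_1$ the kernel equals $|x|^{N-2}-1\to\infty$. The region $\{|x-y|<|x|/2\}$ (which lies inside $\{|y|>R_0\}$ once $|x|\ge 2R_0$) is exactly where $(P_2)$ must be invoked, and $(P_2)$ only yields $|x|^{N-2}\int_{|x-y|<|x|/2}\frac{P(y)}{|x-y|^{N-2}}\,dy\le C$ --- a fixed constant, not a quantity that becomes smaller than $\varepsilon$ by enlarging $R_0$ or $|x|$. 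Under $(P_1)$--$(P_2)$ alone there is no mechanism to make this near-diagonal piece small (it would follow from a stronger pointwise decay such as $P(y)\le C|y|^{-\alpha}$ with $\alpha>N$, but that is not assumed), so the equismallness $\sup_n\sup_{|x|\ge R}|w_n(x)-\ell_n|\to0$ is not established, and with it the Cauchy property of $(w_{n_k})$ in the sup norm --- the heart of compactness in $E_0$ --- remains open. A secondary loose end: the claim $\ell_{n_k}\to\ell$ ``by dominated convergence'' presupposes pointwise convergence of $\rho_{n_k}$, hence of $v_{n_k}$, which a merely bounded sequence in $E_0$ need not possess along any subsequence; this could be repaired from the equi-tail estimate itself, but only once that estimate is actually secured.
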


\begin{proof}
The proof follows as in \cite[Lemma 3.3]{Alves-delima-Nobrega}. 
\end{proof}

We observe that the solutions of our problem $(\tilde{P})$ are the zeros of $I-T_t$. In the next section, we will proceed to compute the degree of $I-T_t$ in subsets of $E_0$.

\section{Computation of Some Topological Degrees}

\begin{lemma}\label{3.7}
		Assume $(G_1)$, $(P_1)$ and $(P_2)$. Let $f_1\in \mathcal{N}^{\perp}$ and $t_0\in\R$. Then, there is $R>0$ such that $deg(I-T_{t_0},B_R,0)=0$, where $B_R=\{u\in E_0; \|u\|<R\}$.
\end{lemma}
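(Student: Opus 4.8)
The plan is to show that for $t_0$ fixed and $R$ large, the equation $u = T_{t_0}(u)$ has no solution on the boundary of a large ball, and that moreover the only possible fixed points all lie in a region where a homotopy argument forces the degree to vanish. The standard mechanism (as in Ambrosetti--Prodi theory, cf. \cite{Djairo}) is to exploit condition $(G_1)$ on the right end, namely $\liminf_{s\to\infty} g(s)/s > \lambda_1$, which means the nonlinearity eventually lies \emph{above} the first eigenvalue line. This superlinearity relative to $\lambda_1$ is precisely what prevents solutions from existing once their positive part is large, and it is what I would use to push solutions out of any fixed ball and thereby kill the degree.

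\textbf{Step 1: a modified homotopy.} I would embed $T_{t_0}$ in a family that adds a forcing term along the first eigenfunction. Concretely, recalling $\overline{\mu} > \lambda_1$ from (\ref{eq02}), I would consider, for $\tau \in [0,+\infty)$, the operator associated with the problem
$$
-\Delta u = P(x)\big(g(u) + t_0\phi_1 + f_1 + \tau\phi_1\big), \quad u \in D^{1,2}(\R^N),
$$
so that $\tau = 0$ recovers $T_{t_0}$. By Lemma \ref{l2} there is a threshold $\tau_*$ beyond which the problem $(\tilde P)$ with forcing parameter $t_0 + \tau$ has \emph{no} solution at all; choosing $\tau$ just past this threshold, the homotopy $u - T_{t_0+\tau}(u)$ has no zeros in all of $E_0$, and in particular the degree of the endpoint is $0$.

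\textbf{Step 2: a uniform a priori bound along the homotopy.} To invoke homotopy invariance I must rule out zeros on $\partial B_R$ for every parameter in the range $[t_0, t_0+\tau]$. Here I would lean on the a priori estimate machinery of Section \ref{pe}: Lemma \ref{l11} bounds $u^-$ uniformly, and the subsequent lemmas bound $\int_{\R^N}|\nabla u^+|^2\,dx$ and then $|u|_\infty$ in terms of $|f|_\infty$ only. Since the forcing $f_1 + (t_0+s)\phi_1$ ranges over a compact set of $L^\infty$ data as $s \in [0,\tau]$, these estimates are uniform in $s$, giving a single constant $C$ with $|u|_\infty \le C$ for \emph{every} solution of \emph{every} member of the family. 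Translating the $L^\infty$ and decay bounds into the $E_0$-norm via the representation formula and $(P_2)$ (exactly as in Lemma \ref{continuidade}), I obtain a uniform $E_0$-bound $\|u\| \le C'$. Choosing $R > C'$ then guarantees no zero of any $u - T_{t_0+s}(u)$ lies on $\partial B_R$.

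\textbf{Step 3: conclude by homotopy invariance.} With no zeros on $\partial B_R$ throughout $s \in [0,\tau]$ and the endpoint operator $T_{t_0+\tau}$ having no fixed point whatsoever in $B_R$ (so its degree is $0$), homotopy invariance of the Leray--Schauder degree yields
$$
\deg(I - T_{t_0}, B_R, 0) = \deg(I - T_{t_0+\tau}, B_R, 0) = 0.
$$
The main obstacle I anticipate is \textbf{Step 2}: making the a priori estimates genuinely uniform across the whole homotopy parameter and, more delicately, converting the pointwise/decay estimates into control of the \emph{full} $E_0$-norm (which includes the weight $|x|^{N-2}$) rather than merely $|u|_\infty$. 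The decay half of the $E_0$-norm is where the whole-space setting bites, and the representation formula together with hypothesis $(P_2)$ is the tool that must be applied carefully and uniformly in the parameter to close the argument.
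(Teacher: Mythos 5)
Your proposal is correct and follows essentially the same route as the paper: homotope $T_{t_0}$ to $T_{t_1}$ with $t_1=t_0+\tau$ past the non-existence threshold (the paper uses $t_1>\max\{\alpha(f_1),t_0\}$ from Theorem \ref{T1}, you use $\tau_*$ from Lemma \ref{l2} --- equivalent for this purpose), use the Section \ref{pe} a priori bounds uniformly on the parameter interval to exclude zeros on $\partial B_R$, and conclude $\deg(I-T_{t_0},B_R,0)=\deg(I-T_{t_1},B_R,0)=0$ since the endpoint has no fixed points. Your Step 2 remark about upgrading the $|u|_\infty$ bound to a bound on the full $E_0$-norm via the representation formula and $(P_2)$ is a point the paper passes over silently, and it is indeed the part that needs care.
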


\begin{proof}
	From Theorem \ref{T1}, the problem $(\tilde{P})$ has no solution if $t=t_1>\alpha(f_1)$. Thus, choosing a $t_1>\max\{\alpha(f_1),t_0\}$, there is a constant $R>0$ such that $\|u\|<R$, for all possible solutions of $(\tilde{P})$ with $f_1\in \mathcal{N}^{\perp}$ fixed and all $t\in[t_0,t_1]$, see Lemma \ref{3.4}. Clearly, the operator $T_t$, for $t\in [t_0,t_1]$, is a compact homotopy joining $T_{t_0}$ to $T_{t_1}$. As $(I-T_t)(u)\neq0$ always that $\|u\|=R$ and $t\in[t_0,t_1]$, by topological degree we have the equality  $deg(I-T_{t_0},B_R,0)=deg(I-T_{t_1},B_R,0)=0$, since problem $(\tilde{P})$ with $t=t_1$ has no solution.
\end{proof}

\begin{lemma}
	Assume $(G_1)$, $(P_1)$ and $(P_2)$. Given $f_1\in \mathcal{N}^{\perp}$ and $t_0<\alpha(f_1)$, there is an open bounded subset $\mathcal{O}$ of $E_0$, such that $deg(I-T_{t_0},\mathcal{O},0)=1$.
\end{lemma}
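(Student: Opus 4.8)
The plan is to isolate, inside a suitable open bounded set $\mathcal{O}$, exactly the solutions of $(\tilde{P})$ that are trapped between the sub- and super-solutions associated with $t_0$, and to show that the Leray--Schauder degree counts this trapped solution with multiplicity $+1$. First I would produce an ordered pair of sub/super-solutions at the level $t_0$. Since $t_0<\alpha(f_1)$ and $\alpha(f_1)$ is the supremum of the solvable parameters, there is $t_3$ with $t_0<t_3\le\alpha(f_1)$ for which $(\tilde{P})$ admits a solution $\overline{u}$; because $\phi_1>0$ and $t_0<t_3$, this $\overline{u}$ is a super-solution of $(\tilde{P})$ at the level $t_0$. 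For the sub-solution I take the function $w=w_{t_0}$ furnished by Lemma \ref{l3} and Corollary \ref{C1} (with $f=t_0\phi_1+f_1$), and Proposition \ref{l4} yields $w\le\overline{u}$ in $\R^N$. Thus $w\le\overline{u}$ is an ordered sub/super-solution pair for $t_0$.

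Next I would freeze the nonlinearity outside the order interval. Define the truncation $\hat{g}(x,s)$ equal to $g(s)$ for $w(x)\le s\le\overline{u}(x)$, equal to $g(w(x))$ for $s<w(x)$ and to $g(\overline{u}(x))$ for $s>\overline{u}(x)$, exactly as the function $\tilde{h}$ in Proposition \ref{p2}, and let $\hat{T}_{t_0}:E_0\to E_0$ be the corresponding solution operator. Since $w,\overline{u}$ are bounded, $\hat{g}(x,v(x))$ is bounded by a constant independent of $v$, so by the representation formula and $(P_2)$ the range $\hat{T}_{t_0}(E_0)$ is contained in a fixed ball of $E_0$, and $\hat{T}_{t_0}$ is compact just as $T_{t_0}$ is. A maximum-principle argument built on Lemma \ref{max} (testing the equation $u=\hat{T}_{t_0}(u)$ against $(w-u)^+$ and against $(u-\overline{u})^+$, and using that $\hat{g}(x,u)=g(w)$ on $\{u<w\}$ and $\hat{g}(x,u)=g(\overline{u})$ on $\{u>\overline{u}\}$ together with the sub/super-solution inequalities) shows that every fixed point of $\hat{T}_{t_0}$ lies in the order interval $[w,\overline{u}]$. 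Because $\hat{T}_{t_0}$ has bounded range, for $\rho$ large the homotopy $(\theta,u)\mapsto\theta\hat{T}_{t_0}(u)$ is admissible on $\partial B_\rho$, whence $deg(I-\hat{T}_{t_0},B_\rho,0)=deg(I,B_\rho,0)=1$; excision to an open bounded set $\mathcal{O}$ containing $[w,\overline{u}]$ and contained in $B_\rho$ then gives $deg(I-\hat{T}_{t_0},\mathcal{O},0)=1$.

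Finally I would transfer this to $T_{t_0}$. On the order interval one has $\hat{g}(x,\cdot)=g(\cdot)$, so $\hat{T}_{t_0}$ and $T_{t_0}$ coincide on $[w,\overline{u}]$ and share the same fixed points there; the goal is to choose $\mathcal{O}$ so that $deg(I-T_{t_0},\mathcal{O},0)=deg(I-\hat{T}_{t_0},\mathcal{O},0)=1$. I expect this transfer to be the main obstacle, and it is genuinely more delicate than in the bounded-domain case. Indeed, since every element of $E_0$ decays at infinity, the pointwise order interval $\{w<v<\overline{u}\}$ has empty interior in the $|\cdot|_\infty$ topology, so one cannot simply take $\mathcal{O}$ to be its interior; moreover one cannot homotope $\hat{T}_{t_0}$ to $T_{t_0}$ through the full nonlinearity, since $T_{t_0}$ may have additional (large) solutions above $\overline{u}$ which must not be swept into $\mathcal{O}$.

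The way around, following the construction in \cite{Alves-delima-Nobrega}, is to exploit the weighted component of the norm of $E_0$ together with the decay estimate $(P_2)$, which controls $|x|^{N-2}|v(x)|$ uniformly, to build a genuinely open neighborhood $\mathcal{O}$ of $[w,\overline{u}]$ on whose closure the truncated and the original problems have the same solution set, while keeping $\overline{\mathcal{O}}\subset B_\rho$ via the a priori bound of Lemma \ref{3.4} and ensuring that $I-T_{t_0}$ has no zeros on $\partial\mathcal{O}$. Once $\mathcal{O}$ is chosen so that $\hat{T}_{t_0}=T_{t_0}$ on $\overline{\mathcal{O}}$, the two operators agree as maps on $\overline{\mathcal{O}}$, their degrees over $\mathcal{O}$ are literally equal, and the value $deg(I-T_{t_0},\mathcal{O},0)=1$ follows from the previous paragraph.
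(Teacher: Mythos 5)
Your overall strategy (ordered sub/super-solution pair at level $t_0$, truncation of $g$ outside the order interval, degree $1$ for the truncated operator via a homotopy, then transfer back to $T_{t_0}$) is the same as the paper's, and your first three paragraphs are essentially correct. The genuine gap is in the final transfer step, and it is not merely a detail you postponed: the construction you describe cannot work as stated. You ask for an open bounded set $\mathcal{O}$ that is a \emph{neighborhood of the whole order interval} $[w,\overline{u}]$ and on whose closure $\hat{T}_{t_0}=T_{t_0}$. But $\hat{T}_{t_0}(v)=T_{t_0}(v)$ forces $\hat{g}(x,v(x))=g(v(x))$ a.e.\ (as $P>0$), i.e.\ essentially $w\le v\le\overline{u}$ pointwise; so the coincidence set of the two operators is the order interval itself. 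Any open set containing all of $[w,\overline{u}]$ must contain points outside it (e.g.\ every neighborhood of $\overline{u}$ contains functions exceeding $\overline{u}$ somewhere), so no such $\mathcal{O}$ exists. Your fallback formulation, that the two problems merely have ``the same solution set'' on $\overline{\mathcal{O}}$, is weaker and does not by itself give equality of degrees.

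The paper resolves this by going in the opposite direction: $\mathcal{O}$ is taken to be an open subset \emph{of} the order interval rather than a neighborhood of it, namely
$$
\mathcal{O}=\Big\{u\in B_r(0)\subset E_0:\ \underline{u}<u<\overline{u}\ \mbox{in}\ \R^N,\ \liminf_{|x|\to\infty}|x|^{N-2}(u-\underline{u})>0,\ \liminf_{|x|\to\infty}|x|^{N-2}(\overline{u}-u)>0\Big\},
$$
which is open in $E_0$ precisely because the norm of $E_0$ controls $|x|^{N-2}|v(x)|$ (this is where the weighted component you alluded to actually enters), and is also convex. Since $\overline{\mathcal{O}}$ sits inside the closed order interval, $\overline{T}_{t_0}=T_{t_0}$ there literally. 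The price is that one must then check that the fixed points of the truncated operator land in this smaller set; the paper shows the stronger statement $\overline{T}_{t_0}(E_0)\subset\mathcal{O}$, using that $\overline{u}$ is a \emph{strict} supersolution (coming from a solution at level $t_1>t_0$ and $\phi_1>0$) so that the Riesz representation formula yields the strict inequalities and the positive liminfs at infinity. The degree is then computed by the homotopy $H_\theta(u)=\theta\overline{T}_{t_0}(u)+(1-\theta)\psi$ with $\psi\in\mathcal{O}$ fixed, which is admissible by convexity of $\mathcal{O}$ and gives degree $1$ directly; your excision argument from a large ball would also work once you know all fixed points of the truncated operator lie in $\mathcal{O}$, but that containment is exactly the strictness argument you are missing. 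To repair your proof you need to (a) replace ``open neighborhood of $[w,\overline{u}]$'' by the set above, and (b) prove the strict trapping with the liminf conditions, for which the non-strict maximum-principle argument of your third paragraph is not sufficient.
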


\begin{proof}
	Let $t_1\in\R$ such that $t_0<t_1<\alpha(f_1)$. From Theorem \ref{T1}, we know that $(\tilde{P})$ with $t=t_1$ has a solution $\overline{u}$, which is a supersolution of $(\tilde{P})$ with $t=t_0$, that is, 
	\begin{equation*}\label{30}
	\left\{
	\begin{array}{lcl}
	-\Delta \overline{u}>P(x)\left(g(\overline{u})+t_0\phi_1(x)+f_1(x)\right),\quad\R^N\\
	\lim_{|x|\to +\infty}\overline{u}(x)=0.
	\end{array}
	\right.
	\end{equation*}
By Proposition (\ref{l4}), there is a subsolution $\underline{u}$ of 
	\begin{equation*}
	\left\{
	\begin{array}{lcl}
	-\Delta \underline{u}<P(x)\left(g(\underline{u})+t_0\phi_1(x)+f_1(x)\right),\quad\R^N\\
	\lim_{|x|\to +\infty}\underline{u}(x)=0
	\end{array}
	\right.
	\end{equation*}
that satisfies $\underline{u}<\overline{u}$ in $\R^N$. Now, let us consider the set
	\begin{equation}
	\mathcal{O}=\{u \in B_r(0) \subset E_0;\, \underline{u}(x)<u(x)<\overline{u}(x)\mbox{ in }\R^N,\,\liminf_{|x|\rightarrow\infty}|x|^{N-2}(u-\underline{u})>0,\quad \liminf_{|x|\rightarrow\infty}|x|^{N-2}(\overline{u}-u)>0\}
	\end{equation}
	where the radius $r$ will be determined later on. 
	\begin{claim}
		The set $\mathcal{O}$ is open, bounded and convex.
	\end{claim}
Since the boundedness and convexity of $\mathcal{O}$ are immediate, we will only prove that $\mathcal{O}$ is open. Indeed, for each $u\in \mathcal{O}$, there is $\xi>0$ such that 
	\begin{equation}\label{b1}
	\liminf_{|x|\rightarrow\infty}|x|^{N-2}(u-\underline{u}),\liminf_{|x|\rightarrow\infty}|x|^{N-2}(\overline{u}-u)\geq \xi>0.
	\end{equation}
Therefore, for $R$ large enough, 
\begin{equation}\label{b3}
	|x|^{N-2}(u-\underline{u}),|x|^{N-2}(\overline{u}-u)\geq \xi/2,\quad |x|\geq R.
\end{equation}
Since
	\begin{equation}\label{b2}
	\underline{u}(x)<u(x)<\overline{u}(x),\quad\forall x\in\R^N,
	\end{equation}
for $\delta \in (0,\xi/2)$ small enough, we must have $\underline{u}(x)<z(x)<\overline{u}(x)$ for $|x|\leq R$ with $z\in B_{\delta}(u) \subset E_0$. On the other hand, for $|x|\geq R$,
	\begin{equation}\label{b4}
	|x|^{N-2}[z(x)-\underline{u}(x)]=|x|^{N-2}[z(x)-u(x)]+|x|^{N-2}[u(x)-\underline{u}(x)]\geq -\delta+\xi/2>0,
	\end{equation}
	similarly,
	\begin{equation}\label{b5}
	|x|^{N-2}[\overline{u}(x)-z(x)]\geq -\delta+\xi/2>0.
	\end{equation}
	Consequently, from (\ref{b1})-(\ref{b5}), $z\in \mathcal{O}$, and so, $B_{\delta}(u)\subset\mathcal{O}$. This proves that $\mathcal{O}$ is open.
	
	Now, define $\overline{g}:\R^N\times \R\rightarrow\R$ by
	$$
	\overline{g}(x,s)=\left\{
	\begin{array}{lcl}
	g(\underline{u}(x)),\quad s\leq \underline{u}(x),\\
	g(s),\quad \underline{u}(x)\leq s\leq\overline{u}(x),\\
	g(\overline{u}(x)),\quad s\geq \overline{u}(x).
	\end{array}
	\right.
	$$
	Recalling that $g$ is increasing and $\overline{u}\in L^{\infty}(\R^N)$, it follows that $\overline{g}$ is bounded in $\R^N\times \R$ and increasing in the variable $s$. Now, by the same arguments used in the previous sections, for each $v\in E_0$, there is a unique solution $u\in E_0$ of the problem below
	$$
	\left\{
	\begin{array}{lcl}
	-\Delta u=P(x)\Big( \overline{g}(x,v)+t\phi_1(x)+f_1(x)\Big),\mbox{ in } \mathbb{R}^N\\
	u \in D^{1,2}(\R^N),\quad \lim_{|x| \to +\infty}u(x)=0.
	\end{array}
	\right.
	\eqno{(WLQ)}
	$$
	Thus, we can defined a compact operator $\overline{T}_t:E_0\rightarrow E_0$ such that $\overline{T}_t(v)=u$. From definition of $\overline{T}_t$, we see that $\overline{T}_t=T_t$ in $\overline{\mathcal{O}}$. 
	\begin{claim}
	The application $\overline{T}_t$ maps $E_0$ into $\mathcal{O}$, provided $r$ is properly chosen.
	\end{claim}
	Since $\overline{g}$ is bounded, from Lemma \ref{continuidade} the solutions $u$ of $(WLQ)$ are uniformly bounded in $E_0$, for all $v\in E_0$. From now on, let us fix  $r>\sup\{\|\overline{T}_{t_0}v\|; v\in E_0\}$. Our goal is proving that for each $v\in E_0$ we have that $u=\overline{T}_{t_0}v \in \mathcal{O}$. Obviously, $\|u\|<r$.  From Lemma \ref{super} and $(WLQ)$, there are $\overline{u} , u \in D^{1,2}(\mathbb{R}^N)$ satisfying  
	\begin{equation*}
	\left\{
	\begin{array}{lcl}
	-\Delta\overline{u}=P(x)F(x)>P(x)[g(\overline{u})+t_0\phi_1+f_1],\quad\R^N\\
	 \overline{u}\in D^{1,2}(\R^N),\quad\lim_{|x|\to +\infty}\overline{u}(x)=0
	\end{array}
	\right.
	\end{equation*}
	and
	\begin{equation*}
	\left\{
	\begin{array}{lcl}
	-\Delta u=P(x)[\overline{g}(x,v)+t_0\phi_1+f_1],\quad\R^N\\
		u \in D^{1,2}(\R^N),\quad\lim_{|x|\to +\infty}u(x)=0.
	\end{array}
	\right.
	\end{equation*}
Then,
	\begin{equation*}
	\left\{
	\begin{array}{lcl}
	(-\Delta)(\overline{u}-u)=P(x)[F(x)-\overline{g}(x,v)+t_0\phi_1+f_1],\quad\R^N\\
	(\overline{u}-u)\in D^{1,2}(\R^N),\quad\lim_{|x|\to +\infty}(\overline{u}-u)(x)=0.
	\end{array}
	\right.
	\end{equation*}
Thereby, by Riesz representation,
	\begin{equation*}
	(\overline{u}-u)(x)=\int_{\R^N}\frac{P(y)[F(y)-\overline{g}(y,v)+t_0\phi_1+f_1]}{|x-y|^{N-2}}dy> \int_{\R^N}\frac{P(y)[g(\overline{u})-\overline{g}(y,v)]}{|x-y|^{N-2}}dy\ge0.
	\end{equation*}
	Therefore, by \cite[Lemma 3.1]{Alves-Lima-Souto}, 
	\begin{equation*}
	\overline{u}(x)>u(x),\quad \forall x\in\R^N\mbox{ and }\liminf_{|x| \to \infty}|x|^{N-2}(\overline{u}-u)(x)>0.
	\end{equation*}
	Similarly, 
	\begin{equation*}
	u(x)>\underline{u}(x),\quad \forall x\in\R^N\mbox{ and }\liminf_{|x| \to \infty}|x|^{N-2}(u-\underline{u})(x)>0.
	\end{equation*}
This shows that $\overline{T}_t(E_0)\subset \mathcal{O}$.

	Now, fixed  $\psi\in \mathcal{O}$, let us consider the compact homotopy $H_{\theta}(u)=\theta\overline{T}_{t_0}(u)+(1-\theta)\psi$, $0\leq \theta\leq 1$ .As $u\neq H_{\theta}(u)$ for all $u\in \partial \mathcal{O}$ and all $\theta\in[0,1]$, we conclude that $deg(I-H_1,\mathcal{O},0)=deg(I-H_0,\mathcal{O},0)$. However, $H_0$ is a constant operator and clearly $deg(I-H_0,\mathcal{O},0)=1$. So $deg(I-\overline{T}_{t_0},\mathcal{O},0)=1$, and the lemma is proved.	
\end{proof}

\subsection*{Proof of Theorem \ref{T2}}

\begin{proof}
	
\textbf{(i)} For $f_1\in\mathcal{N}^{\perp}$ and $t_0<\alpha(f_1)$ be given. By previous lemma, there is a bounded open set $\mathcal{O}$ such that
\begin{equation*}
deg(I-T_{t_0},\mathcal{O},0)=1.
\end{equation*}
Therefore $I-T_{t_0}$ has a zero in $\mathcal{O}$, that is, problem $(\tilde{P})$ has a solution $u_1\in \mathcal{O}$, for these given $f_1$ and $t_0$. Now, choose $R>0$ such that $R>r$. Since by Lemma \ref{3.7}  $deg(I-T_{t_0},B_R,0)=0$, it follows that $deg(I-T_{t_0},B_R\setminus\overline{\mathcal{O}},0)=-1$. This proves that $(\tilde{P})$ has another solution $u_2\in B_R\setminus\overline{\mathcal{O}}$.

\textbf{(ii)} Now, let $f_1\in\mathcal{N}^{\perp}$ and $t_0=\alpha(f_1)$ be given. Take a sequence $t_n<t_0$ and $t_n\rightarrow t_0$. It follows from Theorem \ref{T1} that $(\tilde{P})$ has a solution $u_n$ for the given function $f_1$ and each $t_n$. From the results of Section \ref{pe}, the sequence $(u_n)$ is bounded in $D^{1,2}(\R^N)$. Thus, since $D^{1,2}(\R^N)$ is compactly embedded in $L^2_P(\R^N)$, we have that there is $u\in D^{1,2}(\R^N)$ such that $u_n\rightharpoonup u$ in $D^{1,2}(\R^N)$ and $u_n\rightarrow u$ in $L^2_P(\R^N)$. Therefore, $u$ is a solution of problem
\begin{equation*}
-\Delta u=P(x)\Big( g(u)+t_0\phi_1(x)+f_1(x)\Big),\mbox{ in } \mathbb{R}^N,\quad u\in D^{1,2}(\R^N).
\end{equation*}
Using the same argument of the Lemma \ref{continuidade}, we also $u\in E_0$, that is, 
$$
\lim_{|x| \rightarrow \infty}u(x)=0,
$$ 
finishing the proof. 
\end{proof}

\end{document}